\newtcolorbox{mybox}{colback=red!5!white,colframe=red!75!black}
\declaretheorem[style=remark]{example}
\newlength{\tabwidth}
\newlength{\tabheight}
\newlength{\tabrule}
\newlength{\tabwidthx}
\newlength{\tabheightx}
\def\gentabbox#1#2#3#4{\vbox to \tabheight{\setlength{\tabrule}{#3}%
  \setlength{\tabwidthx}{#1\tabwidth}\addtolength{\tabwidthx}{\tabrule}%

\setlength{\tabheightx}{#2\tabheight}\addtolength{\tabheightx}{-\tabheight}%
  \hbox to #1\tabwidth{%
 \hspace{-0.5\tabrule}\rule{\tabrule}{#2\tabheight}\hspace{-\tabrule}%
    \vbox to #2\tabheight{\hsize=\tabwidthx%
      \vspace{-0.5\tabrule}\hrule width\tabwidthx height\tabrule%
      \vspace{-0.5\tabrule}\vfil%
      \hbox to \tabwidthx{\hss#4\hss}%
        \vfil\vspace{-0.5\tabrule}%
      \hrule width\tabwidthx height\tabrule\vspace{-0.5\tabrule}}%
 \hspace{-\tabrule}\rule{\tabrule}{#2\tabheight}\hspace{-0.5\tabrule}}%
  \vspace{-\tabheightx}}}
\def\genblankbox#1#2{\vbox to \tabheight{\vfil\hbox to
#1\tabwidth{\hfil}}}
\def\tabbox#1#2#3{\gentabbox{#1}{#2}{0.4pt}{\strut #3}}
\newenvironment{tableau}{\bgroup\catcode`\:=13 \catcode`\.=13
  \catcode`\;=13 \catcode`\>=13 \catcode`\^=13
  \setlength{\tabheight}{3ex}\setlength{\tabwidth}{3ex}%
  \def\b##1##2##3{\gentabbox{##1}{##2}{1.2pt}{\vbox{##3}}}%
  \def\n##1##2##3{\gentabbox{##1}{##2}{0.4pt}{\vbox{##3}}}%
  \vbox\bgroup\offinterlineskip}{\egroup\egroup}
\newtheorem{theorem}{Theorem}[section]
\newtheorem{corollary}[theorem]{Corollary}
\newtheorem{lemma}[theorem]{Lemma}
\newtheorem{proposition}[theorem]{Proposition}
\newtheorem{conjecture}[theorem]{Conjecture}
\newtheorem*{theorem*}{Theorem}
\theoremstyle{definition}
\newtheorem{definition}[theorem]{Definition}
\theoremstyle{remark}
\DeclareMathOperator{\SDT}{\operatorname{SDT}}	
\DeclareMathOperator{\SSDT}{\operatorname{SSDT}}
\DeclareMathOperator{\MT}{\operatorname{MT}}
\DeclareMathOperator{\MTop}{\MT_{op}}
\DeclareMathOperator{\RS}{RS}
\DeclareMathOperator{\area}{Z}
\newcommand{\opencycles}{\mathcal{OC}}
\newcommand{\closedcycles}{\mathcal{CC}}
\newcommand{\corecycles}{\mathcal{KC}}
\newcommand{\noncorecycles}{\mathcal{NC}}
\newcommand{\oppclosedcycles}{\closedcycles^{op}}
\newcommand{\oppcorecycles}{\corecycles^{op}}
\newcommand{\oppnoncorecycles}{\noncorecycles^{op}}
\newcommand{\rxi}{\tau^\mathcal L}
\numberwithin{figure}{section}
\numberwithin{theorem}{section}
\def\l@subsection{\@tocline{2}{0pt}{2.5pc}{5pc}{}}
\begin{document}
\renewcommand\thmcontinues[1]{Continued}
\title{Kazhdan-Lusztig left cells in type $B_n$ for intermediate parameters}

\author{Edmund Howse}
\address{Department of Mathematics \\ National University of Singapore \\ 10 Lower Kent Ridge Road \\ Singapore 119076}
\email{edmund.howse@nus.edu.sg}
\thanks{The first named author is supported by Singapore MOE Tier 2 AcRF MOE2015-T2-2-003. }

\author{Thomas Pietraho}
\address{Department of Mathematics, Bowdoin College \\Brunswick, Maine, USA 04011}
\email{tpietrah@bowdoin.edu}
\urladdr{www.pietraho.com}

\date{}

\begin{abstract}
Using a characterization of a generalized $\tau$-invariant for intermediate parameter Hecke algebras in type $B_n$ obtained in \cite{howse:vogan}, we verify a conjectural description of Kazhdan-Lusztig cells in this setting due to C.~Bonnaf\'e, L.~Iancu, M.~Geck, and T.~Lam.
\end{abstract}

\maketitle
\tableofcontents


\section{Introduction}

Hecke algebras play a central role in the representation theory of reductive algebraic groups over finite and $p$-adic fields.
Each can be constructed as a specialization of an Iwahori-Hecke algebra $\mathcal{H}$ which itself can be defined via generators and relations from a Coxeter group $W$ without explicit dependence on the underlying algebraic group.

Defined in \cite{kl:invent} and broadened by G.~Lusztig to the setting of weighted Iwahori-Hecke algebras in \cite{lusztig:leftcells} and \cite{lusztig:unequal}, Kazhdan-Lusztig cells describe partitions of $W$ known as left, right, and two-sided cells.
Their well-known classification in type $A$ motivates our work. We briefly recount the results.  The Robinson-Schensted map
$$\textup{RS}: S_n \rightarrow \textup{SYT}(n) \times \textup{SYT}(n)$$ defines a bijection between the symmetric group and same-shape pairs of standard Young tableaux.  If we write $\RS(w) = (P(w),Q(w))$, then $P(w)$ and $Q(w)$ are known as the left and right tableaux of $w$.
It follows from the work of A.~Joseph in \cite{joseph:characteristic} and \cite{joseph:towards2} on primitive ideals for complex Lie algebras of type $A$ that left cells in this setting consist of those permutations whose right tableaux agree, right cells of those permutations whose left tableaux agree, and two-sided cells of all permutations whose image tableaux have the same shape.  See \cite{ariki:rs} for a comprehensive exposition.

For weighted Coxeter groups in type $B_n$, the focus of this paper, partitions into cells depend on a single positive parameter $s$.  When $s=1$, known as the {\it equal-parameter case}, cell partitions can again be derived from the classification of primitive ideals for complex Lie algebras of type $B_n$.
Obtained by D.~Garfinkle in \cite{garfinkle1}, \cite{garfinkle2}, and \cite{garfinkle3}, the theory mimics type $A$ results, but depends on a correspondence between the Coxeter group $W_n$ of type $B_n$ and same-shape pairs of standard domino tableaux.  This time, left and right cells in $W_n$ are the pre-images of certain equivalence classes of right and left tableaux, respectively; and two-sided cells are described by equivalence classes of partitions.

Kazhdan-Lusztig cells have been determined for a few other values of the parameter $s$  in type $B_n$.  When $s=\frac{1}{2}$ and $s=\frac{3}{2}$, their classification appears already in  \cite{lusztig:leftcells} and for $s > n-1$, known as the {\it asymptotic case}, the problem was resolved by C.~Bonnaf\'e and L.~Iancu in \cite{bonnafe:iancu}. Reconciling these descriptions, C.~Bonnaf\'e, L.~Iancu, M.~Geck, and T.~Lam formulated a set of conjectures relating cells for all values of the parameter $s$ to a one-parameter family $G_r$ of domino-insertion algorithms in \cite{bgil} modeled after Garfinkle's original map defined in \cite{garfinkle1}.

In the present paper, we address the case $s = n-1$.  Using results obtained in \cite{howse:vogan}, we verify the above conjectures.
The key is an enhancement of D.~Vogan's generalized $\tau$-invariant  \cite{vogan:tau} proposed in \cite{bonnafe:geck}.  We show that as is true in type $A$ and the equal-parameter case of type $B_n$, the left-cell, generalized $\tau$-invariant, and certain tableaux-based partitions of $W$ coincide.

Our paper has the following structure.  In Section \ref{section:preliminaries}, we describe preliminaries about the Coxeter groups of type $B_n$, domino tableaux, and the domino insertion maps $G_r$. Section \ref{section:cells} recounts the definitions of Kazhdan-Lusztig cells, a conjecture of C.~Bonnaf\'e, L.~Iancu, M.~Geck, and T.~Lam, and results on the $\tau$-invariant and descent sets. The final Section \ref{section:intermediate} contains the proofs of the main results as well as motivating examples.


\section{Preliminaries}\label{section:preliminaries}


\subsection{Hyperoctahedral groups}\label{section:hyperoctahedral}

Let $W_n = W(B_n)$ be the Weyl group of type $B_n$.  We realize it as the set of signed permutations on $n$ letters, writing an element in one-line notation as $w = (w_1 \, w_2 \, \ldots w_n)$.   It is a Coxeter group $(W_n,S)$ described by the Coxeter diagram:

\begin{center}
\begin{picture}(300,30)
\put( 50 ,10){\circle*{5}} \put( 50, 8){\line(1,0){40}} \put( 50,
12){\line(1,0){40}} \put( 48, 20){$t$} \put( 90 ,10){\circle*{5}}
\put(130 ,10){\circle*{5}} \put(230 ,10){\circle*{5}} \put( 90,
10){\line(1,0){40}} \put(130, 10){\line(1,0){25}} \put(170,
10){\circle*{2}} \put(180, 10){\circle*{2}} \put(190,
10){\circle*{2}} \put(205, 10){\line(1,0){25}} \put( 88 ,20){$s_1$}
\put(128, 20){$s_{2}$} \put(222, 20){$s_{n-1}$}
\end{picture}
\end{center}
where we identify the generating reflections with signed permutations as $$t = (\overline{1} \, 2 \, \ldots \, n) \text{\hspace{.2in} and  \hspace{.2in}} s_i = (1 \, 2 \ldots \, i+1 \, i \ldots n).$$  We will use a bar to denote a negative entry. Write $t=t_1$ and for $2 \leq k \leq n$, let  $t_k := s_{k-1} \cdots s_1ts_1 \cdots s_{k-1}$, so that
\[t_k = ( 1 \, 2 \, \ldots \, \overline{k} \, \ldots \, n).\]


\subsection{Domino tableaux}\label{subsection:domino}

Let $\lambda$ be a partition of $n \in \mathbb{N}$ and identify it with a Young diagram $Y(\lambda)$, a left-justified array of squares whose row lengths decrease weakly.
A {\it staircase partition} takes the form $\lambda_r = [r, r-1, \ldots,  1]$ for some $r \in \mathbb{N}$.  We extend this notion, letting $\lambda_0$ denote the empty partition.

Write $s_{ij}$  for the square  lying in row $i$ and column $j$  of $Y(\lambda)$.  A {\it domino} is a pair of squares in $Y(\lambda)$ of the form $\{s_{ij},s_{i+1,j}\}$ or $\{s_{ij},s_{i,j+1}\}$; it is {\it removable} from $Y(\lambda)$ if deleting its underlying squares leaves either the empty diagram, or another Young diagram which contains the square $s_{11}$.  Starting with a Young diagram $Y(\lambda)$ we can iteratively delete removable dominos.  This process always terminates in a diagram $Y(\lambda_r)$ for some $r \geq 0$.  In fact, $r$ is determined entirely by $\lambda$ and does not depend on the precise sequence of  removable domino deletions.  We will say that $\lambda_r$ is the {\it $2$-core}, or simply the {\it core} of $\lambda$ and that $\lambda$ is a {\it partition of rank $r$}. Let $\delta_k$ denote the set of all squares $s_{ij}$ which satisfy $i+j = k + 1$.

\begin{example}  The partition $\lambda=[7,6,1^3]$ is of rank three.   Its Young diagram $Y(\lambda)$ as well as a domino tiling representing deletions of removable dominos are given below.

\vspace{.05in}

\begin{tiny}
$$
\begin{tableau}
:.{}.{}.{}.{}.{}.{}.{}\\
:.{}.{}.{}.{}.{}.{}\\
:.{}\\
:.{}\\
:.{}\\
\end{tableau}
\hspace{1in}
\begin{tableau}
:.{}.{}.{}>{}>{}\\
:.{}.{}>{}>{}\\
:.{}\\
:^{}\\
:;\\
\end{tableau}
$$
\end{tiny}

\end{example}

\begin{definition}   A {\it standard domino tableau of rank $r$ and shape $\lambda$} is a tiling of the non-core squares of $Y(\lambda)$ by dominos, each labeled by a unique integer from the set $\{1, \ldots , n\}$ in such a way that the labels increase along the rows and columns of $Y(\lambda)$.  We will write $\SDT_r(\lambda)$ for the set of standard domino tableaux of rank $r$ of shape $\lambda$ and $\SDT_r(n)$ for the set of standard domino tableaux of rank $r$ which contain exactly $n$ dominos.
\end{definition}

For convenience, we will label the squares in the core of $T$ with the integer 0.
The set of ordered pairs of same-shape domino tableaux in $\SDT_r(n)$ will play an important role in what follows; we will denote it as $\SSDT_r(n)$.  We will also have occasion to refer to domino tableaux that satisfy exactly the conditions above, but the set of whose squares with label 0 does not form a staircase partition. We will call these simply {\it standard domino tableaux}.


\subsection{Cycles} \label{section:cycles}

The notion of a cycle within a domino tableau was first introduced in \cite{garfinkle1} and extended to domino tableaux of rank $r$ in \cite{pietraho:rscore}.  We refer the reader to these references for details. Identifying a set of fixed squares in a tableau allows us to define
\begin{itemize}
     \item a partition of its domino labels into disjoint cycles, and
     \item the operation of moving through a cycle to produce another domino tableau.
\end{itemize}
There are two natural choices for the set of fixed squares for a rank $r$ tableau, each producing a distinct cycle partition.
\begin{definition}  Consider a tableau $T \in \SDT_r(n)$.
    \begin{enumerate}
        \item[(a)] If fixed squares in $T$ are defined as those $s_{ij}$ for which $i+j$ has the opposite parity from $r$, we will call the resulting cycles of $T$ {\it regular cycles}, or simply {\it cycles}.
        \item[(b)] On the other hand, if fixed squares in $T$  are those $s_{ij}$ for which $i+j$ has the same parity as $r$, we will call the resulting cycles of $T$ {\it opposite cycles.}
    \end{enumerate}
\end{definition}

In prior work, for instance \cite{pietraho:rscore} and \cite{pietraho:equivalence}, it has only been necessary to address regular cycles for a rank $r$ tableau, but opposite cycles will play an important role herein.

\begin{example}[label=ex:cycles]
Consider the following standard domino tableaux of rank 2:
\vspace{.08in}

\begin{tiny}
$$
\raisebox{.1in}{{\normalsize $S =$}}
\hspace{.1in}
\begin{tableau}
:.{0}.{0}>{1}\\
:.{0}^3^4\\
:^2\\
\end{tableau}
\hspace{.3in}
\raisebox{.1in}{{\normalsize $T =$}}
\hspace{.1in}
\begin{tableau}
:.{0}.{0}>{1}\\
:.{0}>2\\
:^3>4\\
\end{tableau}
$$
\end{tiny}

\noindent
Each domino label forms its own regular cycle in both $S$ as well as in $T$.  The family of opposite cycles in $S$ is $\{\{1\},\{2\},\{3,4\}\},$ while in $T$ it is  $\{\{1\},\{2,4\},\{3\}\}$.
\end{example}

Given a cycle $c$ in $T$, the moving through map constructs another domino tableau $\MT(T,c)$ that differs from $T$ only in the labels of the variable squares of $c$. Disjoint cycles can be moved though independently, and if $U$ is a set of cycles, we will write $\MT(T,U)$ for the domino tableau obtained by simultaneously moving through all of them.  When discussing tableaux pairs,  define  $$\MT((S,T),(U,V)):=(\MT(S,U),\MT(T,V)).$$
When $c$ is an opposite cycle, we will write $\MTop(T,c)$ to emphasize the opposite choice of fixed squares is being used to define the operation.  We refer the reader to \cite[\S 2.2]{pietraho:rscore} for the detailed definitions of the moving through map.

Cycles in a tableau come in a few distinct flavors depending on whether or not the moving through operation changes the underlying Young diagram and which squares are affected.
A cycle for which moving through changes the shape of the underlying tableau is called an {\it open cycle}, otherwise it is {\it closed}.  An open cycle is a {\it core cycle} if moving through it changes the total number of squares in the underlying Young diagram; it is {\it non-core} otherwise.  Respectively, we will write
\begin{center}
$\closedcycles(T),$ $\opencycles(T),$ $\corecycles(T),$ $\text{ and } \noncorecycles(T)$
\end{center}
for the sets of closed, open, core, and non-core cycles in a domino tableau $T$.  Note that $\opencycles(T)$ is a disjoint union of  $\corecycles(T)$ and  $\noncorecycles(T)$.  Our notation for the corresponding sets of opposite cycles will include the superscript $op$.

\begin{example}[continues=ex:cycles]
First we consider the cycle partition for  both $S$ and $T$.  Each of $S$ and $T$ has three core cycles and one non-core cycle:
\vspace{.07in}
\begin{center}
$\corecycles(S) = \corecycles(T) = \{\{1\},\{2\},\{3\}\}$, while
 $ \noncorecycles(S)=\noncorecycles(T) =\{4\}$.
\end{center}
\vspace{.07in}
\noindent
In the opposite cycle partition, each tableau has two core cycles and one closed cycle:
\setlength{\abovedisplayskip}{-10pt}
\setlength{\belowdisplayskip}{-10pt}
\begin{center}
\begin{alignat*}{2}
\oppcorecycles(S)  & = \{\{1\},\{2\}\}, \hspace{.07in} & \oppclosedcycles(S)  = \{\{3,4\}\},\\
\oppcorecycles(T) & =  \{\{1\},\{3\}\}, \hspace{.07in} & \oppclosedcycles(T)  = \{\{2,4\}\}.\\
\end{alignat*}
\end{center}
Moving through all the  core cycles of a standard domino tableau of rank $r$ results in a standard domino tableau of rank $r+1$.  Similarly, moving through all of the opposite core cycles reduces the rank of a standard domino tableau by one.  For instance, with $S$ and $T$ as above,

\vspace{.1in}
\begin{tiny}
$$
\raisebox{.15in}{{\normalsize $\MT(S,\corecycles(S)) =$}}
\hspace{.1in}
\begin{tableau}
:.{0}.{0}.{0}>{1}\\
:.{0}.{0}^4\\
:.{0}^3\\
:^{2}\\
\end{tableau}
\hspace{.3in}
\raisebox{.15in}{{\normalsize \hspace{.1in} and \hspace{.1in} $\MT(T,\corecycles(T)) =$}}
\hspace{.1in}
\begin{tableau}
:.{0}.{0}.{0}>{1}\\
:.{0}.{0}>2\\
:.{0}>4\\
:^3\\
\end{tableau}.
$$
\end{tiny}

\vspace{.1in}
\noindent
In general, for $(S,T) \in \SSDT_r(n)$ the shape of $\MT(S,\corecycles(S))$ will be different from the shape of $\MT(T,\corecycles(T))$, as occurs above.  However, it is possible to amend this by including additional non-core open cycles in the moving though map.  Again looking at regular cycles in the tableaux above, let $\gamma(S,T) = \corecycles(S) \cup \{4\}$ and $\gamma(T,S) = \corecycles(T) \cup \{4\}$, then moving through produces a shape-shape pair of standard domino tableaux:

\vspace{.1in}

\begin{tiny}
$$
\raisebox{.15in}{{\normalsize $\MT(S,\gamma(S,T)) =$}}
\hspace{.1in}
\begin{tableau}
:.{0}.{0}.{0}>{1}\\
:.{0}.{0}>4\\
:.{0}^3\\
:^{2}\\
\end{tableau}
\hspace{.3in}
\raisebox{.15in}{{\normalsize \hspace{.1in} and \hspace{.1in} $\MT(T,\gamma(T,S)) =$}}
\hspace{.1in}
\begin{tableau}
:.{0}.{0}.{0}>{1}\\
:.{0}.{0}>2\\
:.{0}^4\\
:^3\\
\end{tableau}.
$$
\end{tiny}
\end{example}

\vspace{.1in}

\begin{definition}
Consider a pair $(S,T) \in \SSDT_r(n)$. Let $\gamma(S,T)$ and $\gamma(T,S)$ be the minimal sets of open cycles in $S$ and $T$, respectively, that satisfy:
\begin{enumerate}
     \item[(a)] $\corecycles(S) \subset \gamma(S,T)$, $\corecycles(T) \subset \gamma(T,S)$, and
     \item[(b)] the tableaux $\MT(S,\gamma(S,T))$ and $\MT(T,\gamma(T,S))$ have the same shape.
\end{enumerate}
As in  \cite[2.3.1]{garfinkle2}, we refer to $\gamma(S,T)$ as the set of {\it extended open cycles in $S$ relative to $T$ through $\corecycles(S)$} and define $\gamma(T,S)$ similarly.  Finally, write $\gamma_{ST}$ for the pair $(\gamma(S,T), \gamma(T,S))$.
\end{definition}

As detailed in \cite{pietraho:rscore}, the above discussion allows us to define a bijective map on same-shape domino tableau pairs
$\Gamma_r : \SSDT_r(n) \rightarrow \SSDT_{r+1}(n)$ by letting
$$\Gamma_r((S,T)) = \MT((S,T), \gamma_{ST}).$$


\subsection{Domino insertion} \label{section:dominoinsertion}

Modeled on the Robinson-Schensted algorithm, \cite{garfinkle1} and \cite{vanleeuwen:rank} introduced a one-parameter family of bijections
$$G_r: W_n \rightarrow \SSDT_r(n).$$
In its original form, these maps are defined by using a domino insertion algorithm where a pair of standard domino tableaux is constructed simultaneously, the left tableaux by inserting and bumping dominos starting with a Young diagram of the staircase partition $\lambda_r$, while the right tracking the shape of the insertion tableau.  We will write $G_r(w) = (P_r(w),Q_r(w))$ for the image of an element of $W_n$ under this map and extend this notation writing $G_r^k(w) = (P_r^k(w), Q_r^k(w))$ for the tableaux obtained after $k$ insertion steps.

\begin{example}  Consider the signed permutation $w=(4 \, 1 \, \overline{3} \, \overline{2}) \in W_4$.  The following sequence of tableau pairs represents its images under $G_r$ for increasing values of $r$.
\vspace{.05in}
\begin{tiny}
$$
\hspace{-.2in}
\raisebox{.1in}{{\normalsize $P_0(w)=$}}
\hspace{.1in}
\begin{tableau}
:>{1}^4\\
:^2^3\\
:;\\
\end{tableau}
\hspace{.7in}
\raisebox{.1in}{{\normalsize $Q_0(w) =$}}
\hspace{.1in}
\begin{tableau}
:>{1}^4\\
:>2\\
:>3\\
\end{tableau}
$$
\end{tiny}

\begin{tiny}
$$
\hspace{-.2in}
\raisebox{.1in}{{\normalsize $P_1(w)=$}}
\hspace{.1in}
\begin{tableau}
:.{0}>{1}\\
:^2^3^4\\
:;\\
\end{tableau}
\hspace{.7in}
\raisebox{.1in}{{\normalsize $Q_1(w) =$}}
\hspace{.1in}
\begin{tableau}
:.{0}>{1}\\
:>2^4\\
:>3\\
\end{tableau}
$$
\end{tiny}

\begin{tiny}
$$
\hspace{-.1in}
\raisebox{.1in}{{\normalsize $P_2(w)=$}}
\hspace{.1in}
\begin{tableau}
:.{0}.{0}>{1}\\
:.{0}^3^4\\
:^2\\
\end{tableau}
\hspace{.6in}
\raisebox{.1in}{{\normalsize $Q_2(w) =$}}
\hspace{.1in}
\begin{tableau}
:.{0}.{0}>{1}\\
:.{0}>2\\
:^3>4\\
\end{tableau}
$$
\end{tiny}

\begin{tiny}
$$
\raisebox{.1in}{{\normalsize $P_3(w)=$}}
\hspace{.1in}
\begin{tableau}
:.{0}.{0}.{0}>{1}\\
:.{0}.{0}>4\\
:.{0}^3\\
:^2\\
\end{tableau}
\hspace{.5in}
\raisebox{.1in}{{\normalsize $Q_3(w) =$}}
\hspace{.1in}
\begin{tableau}
:.{0}.{0}.{0}>{1}\\
:.{0}.{0}>2\\
:.{0}^4\\
:^2\\
\end{tableau}
$$
\end{tiny}

When $r \geq n-1$, $G_r$ recovers a well-known bijection  between signed permutations and same-shape pairs of standard Young bitableaux, see \cite{bonnafe:iancu}.  Positive values are inserted as horizontal dominos at the top of the tableau and negative values as vertical dominos at the bottom.  If $r \geq n-1$, the two sets of labels do not interact during insertion as is true in the bitableaux algorithm.  Tableaux with this latter property will play a role in what follows, so we define:

\begin{definition}\label{definition:split}  A standard domino tableau $T$ of rank $r$ is {\it split} iff its diagonal $\delta_{r+2}$ contains at least one empty square. We extend this notion in the natural way to same-shape tableau pairs.
\end{definition}

There is a simple relationship among the tableau pairs in $G_r(w)$ for the different values of $r$.  It can be described completely in terms of extended cycles. With $\Gamma_r$ defined as in Section \ref{section:cycles}, we have:

\end{example}

\begin{theorem}[\cite{pietraho:rscore}]\label{proposition:MMT}
\hspace{.1in} $\Gamma_r(G_r(w)) = G_{r+1}(w).$
\end{theorem}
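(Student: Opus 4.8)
The plan is to prove the (a priori stronger) statement that $\Gamma_r\big(G_r^k(w)\big)=G_{r+1}^k(w)$ for every $k$ with $0\le k\le n$, by induction on the number $k$ of insertion steps; the case $k=n$ is the assertion. So that the induction closes, one phrases the hypothesis for domino insertion of an arbitrary word of distinct signed integers rather than only for signed permutations, so that it may be applied to the length-$(k-1)$ prefix of $w$. The base case $k=0$ follows from the fact recalled above that moving through all the core cycles raises the rank by one: applied to the pair of empty rank-$r$ tableaux it yields the pair of empty rank-$(r+1)$ tableaux, and since the two shapes already agree $\Gamma_r$ adjoins no further cycles; this is exactly $G_{r+1}^0(w)$.

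For the inductive step put $(S',T')=G_r^{k-1}(w)\in\SSDT_r(k-1)$ and $(S,T)=G_r^k(w)$, so that $(S,T)$ is obtained from $(S',T')$ by a single step of rank-$r$ domino insertion of the value $w_k$: a bumping path turns $S'$ into $S$, and a new domino labelled $k$ is attached to $T'$ to record the change of shape. Writing $\iota_r$ for this one-step operation, so $(S,T)=\iota_r\big((S',T'),w_k\big)$, and similarly $\iota_{r+1}$ in rank $r+1$, the inductive hypothesis gives $\Gamma_r((S',T'))=G_{r+1}^{k-1}(w)$, and the whole statement reduces to the commutation relation
\[
\Gamma_r\Big(\iota_r\big((S',T'),w_k\big)\Big)=\iota_{r+1}\Big(\Gamma_r\big((S',T')\big),\,w_k\Big).
\]

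The subtlety is that $\Gamma_r$ is not simply ``move through the core cycles'' but ``move through the extended open cycles $\gamma_{S'T'}$'', and $\gamma_{S'T'}$ depends on the whole pair and so changes between step $k-1$ and step $k$. The approach is to control that change. Using the Garfinkle-type analysis of how domino insertion interacts with the cycle partition (\cite{garfinkle1}, \cite{pietraho:rscore}), the $k$-th insertion either extends one existing cycle of $S'$ or creates a single new cycle $c_k$ along the bumping path, with $k\in c_k$, and correspondingly on the $T$-side a single cycle acquires, or is created around, the new label $k$. One then verifies that the two compositions in the displayed identity place $c_k$, and every other domino, in the same cell. The core cycles are the easy part: they carry the rank increase $\lambda_r\to\lambda_{r+1}$, they meet a bumping path only at its extremities, and moving through $\corecycles(S)$ therefore commutes with the insertion of $w_k$ up to the behaviour of $c_k$ alone. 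The delicate part is the non-core open cycles that must be adjoined to equalize the two outer shapes: one must check that $\gamma(S,T)\setminus\corecycles(S)$ differs from $\gamma(S',T')\setminus\corecycles(S')$ (and likewise on the $T$-side) by at most the single cycle $c_k$, and --- crucially --- that the minimality condition defining $\gamma$ survives the insertion. This compatibility of the minimal extended cycle set with a bumping path crossing a cycle is where I expect the main obstacle to lie; it is the same kind of case analysis carried out for the maps $\Gamma_r$ in \cite{pietraho:rscore} and, in the rank-zero case, for extended cycles in \cite{garfinkle2}.

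Finally, the recording-tableau component is handled in parallel with the insertion-tableau component: the same description of how the $k$-th step alters the cycle partition applies on the $T$-side, where the only new datum is the label $k$ rather than a bumping path, so it too is governed by whether $k$ falls into an adjoined non-core cycle. Alternatively, granting the inversion symmetry $G_r(w^{-1})=(Q_r(w),P_r(w))$, the whole theorem reduces to the statement about first components, since $\Gamma_r$ manifestly commutes with exchanging the two tableaux of a pair. A simplification that might streamline the entire argument is to replace the recursive insertion rule for $G_r$ by its formulation through local growth rules in the spirit of \cite{vanleeuwen:rank}: the cycle moves then become local toggles, and the commutation with $\Gamma_r$ can be checked cell by cell rather than by tracking global bumping paths.
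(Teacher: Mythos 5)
This theorem is not proved in the paper you were given: it is imported wholesale from \cite{pietraho:rscore}, so your proposal has to stand on its own rather than be measured against an in-paper argument, and as written it does not stand. The framing (induction on the number of insertion steps, generalized to words of distinct signed letters, reducing everything to a commutation relation between one step of rank-$r$ insertion and $\Gamma_r$) is a reasonable plan and is in the spirit of how results of this type are established. But all of the mathematical content lies in the steps you defer or assert without proof: that the $k$-th insertion ``either extends one existing cycle of $S'$ or creates a single new cycle $c_k$ along the bumping path,'' that core cycles meet a bumping path only at its extremities, and that $\gamma(S,T)\setminus\corecycles(S)$ differs from $\gamma(S',T')\setminus\corecycles(S')$ by at most the single cycle $c_k$ with the minimality condition defining extended cycles surviving the insertion. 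You then say explicitly that this is where you expect the main obstacle to lie. That is the theorem; the rest is scaffolding. Moreover, the first two structural claims are not true without substantial qualification: a bumping path can cross several cycles, and one insertion can merge or redistribute cycles of the pair --- which is exactly why the map $\Gamma_r$ must be defined via \emph{extended} open cycles in the first place, and why the published proof requires a detailed case analysis of how insertion interacts with moving through, in the style of Garfinkle's analysis in \cite{garfinkle2}.

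The peripheral reductions you mention are fine and do buy something: the base case $k=0$ is trivial, and granting $G_r(w^{-1})=(Q_r(w),P_r(w))$ one may indeed work only with insertion tableaux, since $\Gamma_r$ treats the two components symmetrically. The suggestion to recast $G_r$ via local growth rules as in \cite{vanleeuwen:rank}, so that moving through becomes a local toggle checked cell by cell, is also a genuinely plausible route. But until the compatibility of the minimal extended-cycle sets with a single insertion step is actually proved --- or the growth-rule commutation is actually verified case by case --- what you have is a correct statement of the difficulty, not a proof.
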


The inverse of $\Gamma_r$, which we write as $\Upsilon_r$, is defined on a tableau pair $(S,T)$ by moving through the extended cycles $\upsilon_{ST} = (\upsilon(S,T), \upsilon(T,S))$, where $\upsilon(S,T)$ is the extended opposite open cycle in $S$ relative to $T$ through $\oppcorecycles(S)$ and $\upsilon(T,S)$ is the extended opposite open cycle in $T$ relative to $S$ through $\oppcorecycles(T)$.

We will be particularly interested in equivalence classes on $W_n$ defined by domino tableaux.  The simplest are those defined by having the same right tableaux:

\begin{definition}  For $T \in \SDT_r(n)$, let
$$\mathcal{C}(T) = \{ w \in W_n \; | \; Q_r(w) = T\}.$$
\end{definition}

\noindent
For each value of $r$, these classes define a partition of $W_n$.  As $r$ varies, there is an intricate, but tractable, relationship among them.  We describe it in the following proposition.

\begin{proposition}\label{proposition:MTDelta}  Let $T \in \SDT_r(n)$ and define $T' := \MT(T,\corecycles(T)) \in \SDT_{r+1}(n)$.  If $U$ is a set of regular cycles in $T$, then it is also a set of opposite cycles in $T'$.  Write $T_U := \MT(T,U)$ and $T'_U := \MTop(T',U)$.  Then

$$\bigsqcup_{U \subset \noncorecycles(T)} \mathcal{C}(T_U)  \hspace{.1in} = \bigsqcup_{U \subset \noncorecycles(T)}\mathcal{C}(T'_U).$$
\end{proposition}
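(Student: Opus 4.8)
The plan is to translate the claimed equality of unions of $\mathcal{C}$-classes into a statement about the bijection $\Gamma_r$ of Section~\ref{section:cycles}, and then verify that statement by tracking the right tableau of a pair. Since $\mathcal{C}(S)=\{w\in W_n\mid Q_k(w)=S\}$ for $S\in\SDT_k(n)$ and $G_k$ is a bijection of $W_n$ onto $\SSDT_k(n)$, the left-hand side is $G_r^{-1}(\mathcal{A}_r)$ and the right-hand side is $G_{r+1}^{-1}(\mathcal{A}_{r+1})$, where $\mathcal{A}_r=\{(P,T_U)\in\SSDT_r(n)\mid U\subseteq\noncorecycles(T)\}$ and $\mathcal{A}_{r+1}=\{(P,T'_U)\in\SSDT_{r+1}(n)\mid U\subseteq\noncorecycles(T)\}$. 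By Theorem~\ref{proposition:MMT} we have $G_{r+1}=\Gamma_r\circ G_r$, so the two unions coincide if and only if $\Gamma_r(\mathcal{A}_r)=\mathcal{A}_{r+1}$. (That both unions are genuinely disjoint follows from the fact that moving through a nonempty set of open cycles changes the underlying shape, so $\MT(T,U)=\MT(T,U')$ forces $U=U'$; I would record this at the outset.)

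The crucial local identity is $\MT(T_U,\corecycles(T_U))=T'_U$. Using the standard cycle manipulations of \cite{garfinkle1} and \cite{pietraho:rscore} — that moving through pairwise disjoint cycles may be carried out one cycle at a time, that moving through non-core cycles leaves the collections of core and of non-core cycles intact, and the first assertion of the Proposition, that a set $U$ of regular cycles of $T$ is a set of opposite cycles of $T':=\MT(T,\corecycles(T))$ — one gets $\MT(T_U,\corecycles(T_U))=\MT(T,\corecycles(T)\sqcup U)=\MTop(T',U)=T'_U$. Writing $\Phi=\MT(-,\corecycles(-))\colon\SDT_r(n)\to\SDT_{r+1}(n)$, this says $\Phi(T_U)=T'_U$; moreover $\Phi$ is a bijection with inverse $\MTop(-,\oppcorecycles(-))$, as one sees by applying $\Gamma_r$ and $\Upsilon_r$ to the diagonal pairs $(S,S)$, where $\gamma(S,S)=\corecycles(S)$ and $\upsilon(S,S)=\oppcorecycles(S)$.

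Now compute the right tableau of $\Gamma_r((P,T_U))$ for $(P,T_U)\in\mathcal{A}_r$. By definition it equals $\MT(T_U,\gamma(T_U,P))$, and by minimality $\gamma(T_U,P)=\corecycles(T_U)\sqcup V$ with $V\subseteq\noncorecycles(T_U)=\noncorecycles(T)$. Peeling off $\corecycles(T_U)$ as above gives $\MTop(\Phi(T_U),V)=\MTop(T'_U,V)=\MTop(\MTop(T',U),V)=\MTop(T',U\triangle V)=T'_{U\triangle V}$, using that moving through a cycle twice is the identity and that $U,V$ are opposite non-core cycles of $T'$. Since $U\triangle V\subseteq\noncorecycles(T)$, this shows $\Gamma_r(\mathcal{A}_r)\subseteq\mathcal{A}_{r+1}$. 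The reverse inclusion is symmetric: for $(P,T'_U)\in\mathcal{A}_{r+1}$ the right tableau of $\Upsilon_r((P,T'_U))$ is $\MTop(T'_U,\upsilon(T'_U,P))$ with $\upsilon(T'_U,P)=\oppcorecycles(T'_U)\sqcup W$, $W\subseteq\oppnoncorecycles(T'_U)$, and peeling off $\oppcorecycles(T'_U)$ (the operation inverse to peeling off $\corecycles$) turns this into $\MT(\Phi^{-1}(T'_U),W)=\MT(T_U,W)=T_{U\triangle W}\in\mathcal{A}_r$. Hence $\Gamma_r(\mathcal{A}_r)=\mathcal{A}_{r+1}$, which is the Proposition.

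The work is entirely in the bookkeeping rather than the ideas. The delicate point is to be sure that the natural identifications between the regular non-core cycles of $T$, the non-core cycles of $T_U$, the opposite non-core cycles of $T'$, and the opposite non-core cycles of $T'_U$ are mutually compatible and commute with the moving-through maps, and that $\gamma$ and $\upsilon$ decompose as (opposite) core cycles together with a set of (opposite) non-core cycles; all of this is contained in \cite{garfinkle1}, \cite{garfinkle2}, and \cite{pietraho:rscore}. The one new ingredient is the identity $\Phi(T_U)=T'_U$, which is what makes the non-core corrections on the two sides match up.
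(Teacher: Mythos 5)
Your argument is correct and follows essentially the same route as the paper: both proofs reduce the equality to showing that $\Gamma_r$ carries the set of pairs with right tableau $T_U$ onto the set of pairs with right tableau $T'_U$, by decomposing the extended cycle $\gamma(T_U,\cdot)$ (resp.\ $\upsilon(T'_U,\cdot)$) as the core (resp.\ opposite core) cycles together with a subset of non-core cycles and tracking the symmetric difference, with the reverse inclusion via $\Upsilon_r$. Your explicit recording of the identity $\MT(T_U,\corecycles(T_U))=T'_U$ and of the disjointness of the unions is only a more detailed writing-out of steps the paper leaves implicit.
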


\begin{proof}  First note the following two sets of simple relationships between sets of open cycles among the above tableaux:
$\corecycles(T_U) = \corecycles(T) =\oppcorecycles(T')$ and   $\noncorecycles(T_U) = \noncorecycles(T) =\oppnoncorecycles(T').$
We will work with sets of same-shape pairs of standard domino tableaux instead of subsets of $W_n$.
Write $\mathcal{D}(T):=G_r(\mathcal{C}(T))$  for the set of same-shape rank $r$ standard domino tableaux pairs whose right tableaux is $T$.

Let $$X(T) := \bigsqcup_{U \subset \noncorecycles(T)} \mathcal{D}(T_U) \text{\hspace{.1in} and \hspace{.1in}} Y(T) := \bigsqcup_{U \subset \noncorecycles(T)}\mathcal{D}(T'_U).$$
We will show that $\Gamma_r(X(T)) = Y(T)$.
Hence consider $(S,T_U) \in \SSDT_r(n)$. By definition of $\Gamma_r$, the right tableau of $\Gamma_r(S,T_U)$ equals $\MT(T_U,\gamma(T_U,S))$.
The extended cycle $\gamma(T_U,S)$ in $T_U$ consists of all the open cycles in $\corecycles(T_U) = \corecycles(T)$ together with a subset of non-core open cycles $U' \subset \noncorecycles(T_U) = \noncorecycles(T)$ which depends on $S$.  If we write $\vartriangle$ for the symmetric-difference relation on sets,
our observation means that $\Gamma_r(S,T_U) \in \mathcal{D}(T'_{U \vartriangle U'})$. Thus we have
$\Gamma_r(X(T)) \subset Y(T)$.

Now consider $(S', T'_U) \in \SSDT_{r+1}(n)$.  Then the right tableau of $\Upsilon_r(S',T'_U)$
equals $\MT(T'_U, \upsilon(T_U',S'))$ where $\upsilon(T_U',S')$ consists of all the opposite core open cycles in  $\oppcorecycles(T_U') = \corecycles(T)$
together with a subset of opposite non-core open cycles $U' \subset \oppnoncorecycles(T') = \noncorecycles(T)$ which depends on $S'$.  This time, $\Upsilon_r(S',T'_U) \in \mathcal{D}(T_{U \vartriangle U'})$ and we have $X(T) \supset \Upsilon_r(Y(T))$.
Together, the two inclusions give us the desired equality.
\end{proof}

In Section \ref{section:intermediate}, we will use the above proposition in two especially tractable cases, when the set $\noncorecycles(T)$ of non-core open cycles is either empty, or contains exactly one cycle.


\section{Cells}\label{section:cells}
Following G.~Lusztig's construction in \cite{lusztig:unequal} and the combinatorics of standard domino tableaux detailed in \cite{bgil} and \cite{pietraho:equivalence}, we describe two one-parameter families of partitions of $W_n$, as well as a conjecture relating them.


\subsection{Kazhdan-Lusztig cells}
The first partition is defined in terms of the algebraic structure of a two-parameter algebra derived from $W_n$.
Let $S$ be the set of simple roots of $W_n$ as defined in Section \ref{section:hyperoctahedral}.
Write $\ell: W_n \rightarrow \mathbb{Z}_{\geq 0}$ for the standard length function computed using the generators in $S$. A weight function $\mathcal{L}: W_n \rightarrow \mathbb{Z}$ is a map satisfying $\mathcal{L}(wy) = \mathcal{L}(w) + \mathcal{L}(y)$ whenever $\ell(wy) = \ell(w) + \ell(y)$.  It is uniquely determined by its values on the simple roots $S$ and is subject to the condition that whenever $s, s' \in S$ and $s s'$ is of odd order, $\mathcal{L}(s) = \mathcal{L}(s')$.  We will let $\mathcal{L}(t) = b$ and $\mathcal{L}(s_i) = a$ for all $i$ with $a,b \in \mathbb{N}$ and write $\mathcal{L} = \mathcal{L}(a,b)$.

Let $\mathcal{H}_n$ be the generic
Iwahori-Hecke algebra over $\mathcal{A}= \mathbb{Z}[v, v^{-1}]$ with
parameters $\{v_s \, | \, s \in S\}$, where  $v_w = v^{\mathcal{L}(w)}$ for
all $w \in W_n$. The algebra $\mathcal{H}_n$ is free over $\mathcal{A}$
and has a basis $\{T_w \, | \, w \in W\}$ in terms of which
multiplication takes the form
$$T_s T_w = \left\{
        \begin{array}{ll}
            T_{sw} & \text{if $\ell(sw) > \ell(w)$, and}\\
            T_{sw}+(v_s-v_s^{-1}) T_w & \text{if $\ell(sw) < \ell(w)$,}
        \end{array}
        \right.
        $$
for $s \in S$ and $w \in W_n$.  As detailed in \cite{lusztig:unequal}, each choice of weight function $\mathcal{L}$, or equivalently, each pair $a,b \in \mathbb{N}$, defines  partitions of $W_n$ into left, right, and two-sided cells.  In fact, these partitions depend only on the ratio $\frac{b}{a}$ of the parameters. In what follows, we will only consider integer values of this ratio.

\newtcolorbox{mybox2}{colback=black!10!white,colframe=white!90!black}
\begin{mybox2}
{\bf Notation:}
We will restrict our attention to $\tfrac{b}{a} \in \mathbb{N}$,
write $$r = \tfrac{b}{a}-1,$$ and call  components of the corresponding partitions {\it left, right, and two-sided $r$-cells}.  Denote the resulting equivalence relations by $\approx_r^L$, $\approx_r^R$, and $\approx_r^{LR}$, respectively, and let $\mathcal{K}_r^L(w)$, $\mathcal{K}_r^R(w)$, and $\mathcal{K}_r^{LR}(w)$
be the cells containing the element $w \in W_n$.  We will focus almost exclusively on left cells and will omit the superscript in our notation unless there is a potential for confusion.
\end{mybox2}


\subsection{Combinatorial cells}

Inspired by the classification of Kazhdan-Lusztig cells for classical Weyl groups in terms of standard Young and domino tableaux, we have the following definition of a family of partitions of $W_n$ based on the images of the maps $G_r$.

\begin{definition}  Consider an non-negative integer $r$ and let $S, T \in \SDT_{r}(n)$.  We will write $S \sim_r T$ if and only if there is a set of non-core open cycles $U \subset \noncorecycles(T)$ such that $S = \MT(T,U)$.  If $w,y \in W_n$, we will say
\begin{enumerate}
    \item[(a)] $w \sim^L_r y$ if and only if  $Q_r(w) \sim_r Q_r(y),$
    \item[(b)] $w \sim^R_r y$ if and only if  $P_r(w) \sim_r P_r(y),$
    \item[(c)] $w \sim^{LR}_r y$ if and only if there is a sequence $w = w_0$, $w_1$, $\ldots$, $w_k = y$ where for all $i < k$, either $w_i \sim^L_r w_{i+1}$ or $w_i \sim^R_r w_{i+1}$.
\end{enumerate}
\end{definition}

\begin{mybox2}
{\bf Notation:} We will call the equivalence classes on $W_n$ defined by the relations $\sim^L_r$, $\sim^R_r$, and $\sim^{LR}_r$ {\it left, right,} and {\it two-sided combinatorial $r$-cells} writing $\mathcal{C}^L_r(w)$, $\mathcal{C}^R_r(w)$, and $\mathcal{C}^{LR}_r(w)$ for  the cells containing the element $w \in W_n$.  Again, we will omit the superscript for left cells in our notation unless there is a potential for confusion.
\end{mybox2}

When $r \geq n-1$ it is easy to see from the definition of the insertion algorithm $G_r$ that all combinatorial left $r$-cells and $(r+1)$-cells coincide.  The same is true for combinatorial right and two-sided cells.  We will call these cells {\it asymptotic} and write $\mathcal{C}^L_a(w)$, $\mathcal{C}^R_a(w)$, and $\mathcal{C}^{LR}_a(w)$, usually omitting the superscript for left cells.

Left combinatorial $r$-cells can be written as unions of  sets of the form $\mathcal{C}(T)$ for rank $r$ tableaux.
Using notation from Section \ref{section:dominoinsertion}, we have $$\mathcal{C}_r(w) = \bigsqcup_{U \in \noncorecycles(T)} \mathcal{C}(T_U)$$ where $T = Q_r(w)$ and $T_U := \MT(T,U)$.
At the heart of this paper is the conjecture of C.~Bonnaf\'e, L.~Iancu, M.~Geck, and T.~Lam which, when interpreted in light of the results from \cite{pietraho:equivalence}, states that combinatorial $r$-cells agree with  Kazhdan-Lusztig $r$-cells on $W_n$:

\begin{conjecture}[\cite{bgil}]\label{conjecture:bgil}
For $w \in W_n$ and $r \in \mathbb{Z}_{\geq 0}$, $\mathcal{C}_r(w) = \mathcal{K}_r(w).$
\end{conjecture}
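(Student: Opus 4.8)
The plan is to prove the conjecture, for each fixed $r$, by showing that three partitions of $W_n$ coincide: the Kazhdan-Lusztig left $r$-cells $\mathcal{K}_r(w)$, the fibers of the generalized $\tau$-invariant, and the combinatorial left $r$-cells $\mathcal{C}_r(w)$. The generalized $\tau$-invariant is the bridge between the algebraic and combinatorial sides, exactly as in type $A$ and the equal-parameter case of type $B_n$. The argument then splits into one easy containment furnished by general cell theory, one combinatorial identification, and one genuinely hard reverse containment.

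First I would record the algebraic containment. The generalized $\tau$-invariant of Vogan \cite{vogan:tau}, in the unequal-parameter form of Bonnaf\'e--Geck \cite{bonnafe:geck}, is constant on Kazhdan-Lusztig left cells; this follows from the behaviour of the Kazhdan-Lusztig basis and the $\mathbf{a}$-function under the relevant generators and holds for every weight function $\mathcal{L} = \mathcal{L}(a,b)$, hence for every $r$. Consequently each left $r$-cell $\mathcal{K}_r(w)$ lies inside a single $\tau$-invariant class, so the $\tau$-partition is coarser than the Kazhdan-Lusztig partition.

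Next I would establish the combinatorial identification: for each $r$, the $\tau$-invariant classes coincide with the combinatorial left $r$-cells. This is the content I would extract from \cite{howse:vogan} together with \cite{pietraho:equivalence}, translating the enhanced $\tau$-invariant into the language of moving through non-core open cycles. To make this uniform in $r$ I would propagate the characterization between adjacent ranks using Theorem \ref{proposition:MMT} and Proposition \ref{proposition:MTDelta}: the former transports $G_r$-images to $G_{r+1}$-images via $\Gamma_r$, while the latter tracks how the classes $\mathcal{C}(T)$ assemble into combinatorial cells as the rank increases, letting one carry the identification inward from the boundary values $r=0$ (Garfinkle's equal-parameter case \cite{garfinkle1}) and $r \geq n-1$ (the asymptotic case of Bonnaf\'e--Iancu \cite{bonnafe:iancu}). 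Combining this identification with the previous paragraph yields $\mathcal{K}_r(w) \subseteq \mathcal{C}_r(w)$ for all $r$, so the combinatorial partition is coarser and the Kazhdan-Lusztig partition refines it.

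The hard part is the reverse inclusion $\mathcal{C}_r(w) \subseteq \mathcal{K}_r(w)$, equivalently the statement that a combinatorial cell does not split into several left cells. I would reduce this to the elementary generators of $\sim_r$, namely a single moving-through of a non-core open cycle, and attempt to realize each such move by an operation known to preserve left cells — a star operation or wall-crossing adapted to the weight $\mathcal{L}(a,b)$. As a fallback I would run a counting argument: since the Kazhdan-Lusztig partition already refines the combinatorial one, matching the total number of left $r$-cells with the number of combinatorial left $r$-cells forces the two to agree. I expect this step to be the main obstacle. It requires uniform control of the Kazhdan-Lusztig structure at the parameter $r$ — in practice the validity of Lusztig's conjectures P1--P15 \cite{lusztig:unequal} in type $B_n$ — together with an explicit understanding of how the elementary cycle moves interact with the Kazhdan-Lusztig basis at each intermediate value of $r$. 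The combinatorial characterization of the $\tau$-invariant is exactly the tool that makes this tractable, and the genuine difficulty is to secure it across the entire intermediate range rather than at a single parameter, so that the same three-way comparison closes the remaining gap between the equal-parameter and asymptotic regimes.
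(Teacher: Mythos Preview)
The paper does not prove this conjecture in full; it verifies it only for the single value $r=n-2$, and the conjecture remains open for $1\le r\le n-3$. Your proposal aims at all $r$ simultaneously, and that scope mismatch is where the argument breaks. The ingredients you cite are parameter-specific: \cite{howse:vogan} treats only the intermediate case $r=n-2$, and \cite{pietraho:equivalence} treats the equal-parameter case; neither supplies the identification ``generalized $\tau$-classes $=$ combinatorial $r$-cells'' for a general $r$. Your propagation scheme via $\Gamma_r$ and Proposition~\ref{proposition:MTDelta} does not bridge the gap either, since for $r<k$ the extended-cycle structure of $G_r(w)$ is genuinely intricate (the paper notes this explicitly before Proposition~\ref{proposition:cycles2}), and there is no mechanism here that carries a cell identification from rank $r+1$ down to rank $r$ in general. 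Finally, you yourself flag the reverse inclusion $\mathcal{C}_r(w)\subseteq\mathcal{K}_r(w)$ as the main obstacle and leave it unresolved; neither the star-operation idea nor the counting fallback is carried out, so what you have is a programme rather than a proof.

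For comparison, the paper's argument at $r=n-2$ does not run through the full generalized $\tau$-invariant machinery in the way you outline. It instead imports from \cite{howse:vogan} a concrete structural theorem (Theorem~\ref{theorem:howse}): every intermediate Kazhdan--Lusztig left cell is either a single asymptotic left cell (on split elements) or the union of exactly two asymptotic left cells determined by the ordinary $\tau$-invariant (on non-split elements). The combinatorial side is then matched to this dichotomy directly. On split elements, $Q_{n-2}(w)$ has no non-core open cycles, so $\mathcal{C}_{n-2}(w)=\mathcal{C}_a(w)=\mathcal{K}_{n-2}(w)$. On non-split elements, a short cycle analysis (Propositions~\ref{proposition:cycles2}--\ref{proposition:n-2}) shows that the unique non-core cycle is $\{n\}$ and that the ordinary $\tau$-invariant already pins down the combinatorial intermediate cell, which then coincides with the Kazhdan--Lusztig cell by Theorem~\ref{theorem:howse}(b). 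The leverage comes from the very special cycle structure available at $r=n-2$; that is precisely what is missing at smaller $r$ and why your uniform strategy cannot currently be completed.
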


Among integral values of $r$, this conjecture has been verified for $r=0$ in \cite{garfinkle3} and for $r \geq n-1$ in \cite{bonnafe:iancu}, the latter in particular implying that $\mathcal{C}_a(w) = \mathcal{K}_a(w)$ for all $w \in W_n$.  Our work concerns $r = n-2$, which we follow \cite{howse:vogan} in calling the {\it intermediate parameter case} and will refer to $r$-cells as {\it intermediate cells}.


\subsection{Descent set and an enhanced $\tau$-invariant}

For a Coxeter group $W$ with generating reflections $S$, the {\it right descent set}, or the {\it right $\tau$-invariant}, of an element $w \in W$ is the set
$$\tau(w) = \{ s \in S \; | \; \ell(ws) < \ell(w) \}.$$
We avoid handedness and refer to it simply as the $\tau$-invariant of $w$.  In the type $B_n$ setting where the weight function has unequal parameters, \cite{pietraho:knuth} and \cite{howse:vogan} extended the $\tau$-invariant to draw from not only simple reflections but also some of the elements of the form $t_j$ defined in Section \ref{section:hyperoctahedral}.  More precisely,

\begin{definition}[\cite{pietraho:knuth} and \cite{howse:vogan}] For $w \in W_n$ and weight function $\mathcal{L} =\mathcal{L}(a,b)$, the enhanced $\tau$-invariant is the set
\[\rxi(w) = \tau(w) \cup\{ t_j  \;|\;  \tfrac{b}a > j-1  \ \textup{ and }\  \ell(wt_j) < \ell(w) \}.\]
\end{definition}
\noindent
When the parameters $a$ and $b$ of the weight function $\mathcal{L}(a,b)$ are equal, then this definition recovers the usual $\tau$-invariant and
$\rxi(w) = \tau(w).$

It is possible read-off the enhanced $\tau$-invariant  both from the signed-permutation representation of $w$ as well as from the domino tableaux $G_r(w)$, although in the latter case this is only straightforward for weight functions with certain parameters.  The following instructs us how to proceed in the case of signed permutations:

\begin{proposition}  Let $w \in W_n$.  Then
\begin{enumerate}
    \item[(a)]  $\ell(ws_i) < \ell(w) \iff w(i+1) < w(i)$, and
    \item[(b)]  $\ell(wt_j) < \ell(w) \iff w(j)<0$.
\end{enumerate}
\end{proposition}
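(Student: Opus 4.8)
The plan is to deduce both descent conditions from the action of $W_n$ on the root system of type $B_n$, using the standard lemma that for a positive root $\beta$ and $w \in W_n$ one has $\ell(w s_\beta) < \ell(w)$ if and only if $w(\beta)$ is a negative root. First I would fix the realization of $W_n$ as the group of signed permutation operators on $\mathbb{R}^n$, with $w \cdot e_k = \sgn(w(k))\, e_{|w(k)|}$, and a positive system whose base is $\{e_2 - e_1, \dots, e_n - e_{n-1}, e_1\}$; with this choice $s_i$ is the reflection in the positive root $e_{i+1} - e_i$, and the conjugate $t_j = s_{j-1}\cdots s_1\, t\, s_1 \cdots s_{j-1}$ is the reflection in the positive root $e_j$. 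Either this root-theoretic route or the purely combinatorial length formula $\ell(w) = \inv(w) + \sum_{w(k) < 0} |w(k)|$ can serve as the backbone; I would likely present whichever is shorter in context.

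For part (a), taking $\beta = e_{i+1} - e_i$ gives $w(\beta) = \sgn(w(i+1))\,e_{|w(i+1)|} - \sgn(w(i))\,e_{|w(i)|}$, and a one-line check over the four sign patterns of the pair $(w(i), w(i+1))$ shows this vector is a negative root precisely when $w(i) > w(i+1)$, that is, when $w(i+1) < w(i)$. Alternatively, right-multiplication by the simple reflection $s_i$ just transposes the entries in positions $i$ and $i+1$; this leaves $\sum_{w(k)<0}|w(k)|$ unchanged and toggles exactly the inversion of the pair $(i,i+1)$, so $\ell(ws_i) = \ell(w) \pm 1$ with the minus sign occurring exactly when $(i,i+1)$ was an inversion of $w$, i.e. when $w(i+1) < w(i)$.

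For part (b), taking $\beta = e_j$ gives $w(e_j) = \sgn(w(j))\, e_{|w(j)|}$, which is a negative root exactly when $w(j) < 0$; this settles the claim at once. The step I expect to be the only real obstacle is giving a self-contained argument that avoids the root lemma: since $t_j$ is a non-simple reflection, $\ell(wt_j) - \ell(w)$ need not equal $\pm 1$, so one cannot argue by parity alone. Writing $wt_j$ as the signed permutation obtained from $w$ by flipping the sign of the entry in position $j$ and tracking how the three standard contributions to $\ell$ --- the inversion count, the number of negative entries, and the number of negative-sum pairs --- change, one finds that they all move in the same direction, so their sum shows $\ell(wt_j) - \ell(w) > 0$ precisely when $w(j) > 0$. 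The bookkeeping in this last computation is the part that needs care, and it is exactly what the reflection-root argument sidesteps.
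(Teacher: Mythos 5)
The paper records this proposition as a standard fact and supplies no proof at all, so there is no argument of the authors' to compare yours with; the question is only whether your argument stands on its own, and your primary (root-theoretic) route does. With the base $\{e_2-e_1,\dots,e_n-e_{n-1},e_1\}$ the identifications $s_i = s_{e_{i+1}-e_i}$, $t=s_{e_1}$ and $t_j = s_{e_j}$ (since $s_{j-1}\cdots s_1$ sends $e_1$ to $e_j$) are correct, the lemma that $\ell(ws_\beta)<\ell(w)$ exactly when $w(\beta)$ is a negative root applies to arbitrary reflections (which is what part (b) needs, $t_j$ being non-simple), and the four-sign-pattern check in (a) and the one-line check in (b) go through. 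This is the expected proof of the statement.

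One caveat concerns the alternative ``self-contained'' bookkeeping you sketch for (b): the claim that the inversion number, the number of negative entries, and the number of negative-sum pairs \emph{all} move in the same direction when the sign of $w(j)$ is flipped is false. For $w=(2\,1)$ and $j=1$ one has $wt_1=(\overline{2}\,1)$, the length rises from $1$ to $2$, but the inversion number drops from $1$ to $0$. What actually saves the computation is a quantitative bound rather than monotonicity of each term: using $\ell(w)=\inv(w)+\sum_{w(k)<0}|w(k)|$, flipping a positive entry $m=w(j)$ to $-m$ increases the second summand by $m$, while only pairs involving position $j$ change inversion status and only entries of absolute value less than $m$ to the right of position $j$ can destroy inversions, so $\inv$ drops by at most $m-1$; hence the total strictly increases, and the reverse flip strictly decreases it. Since you explicitly flagged this bookkeeping as the delicate point and put the reflection-root argument forward as the main proof, this is a repairable slip in an optional variant, not a gap in the proof you are offering.
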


To compute $\rxi(w)$ from the tableaux $G_r(w)$, we first imitate \cite[Section 2.1]{garfinkle2} and define the notion of an enhanced $\tau$-invariant for a domino tableau $Q$ of rank $r$.  Inclusion in this set will be determined by the relative positions of the dominos in $Q$.  We restrict the definition to the setting where the weight function $\mathcal{L}=\mathcal{L}(a,b)$ satisfies $r \geq \frac{b}{a}-1$.  Note that in the equal-parameter case, this inequality is satisfied by all values of $r$.



\begin{definition}
Consider $Q \in \SDT_r(n)$ and let $\mathcal{L}=\mathcal{L}(a,b)$ with $r \geq \frac{b}{a}-1$. Then

\begin{itemize}
\item[(a)] $t_j \in \rxi(Q)$ if and only if $D(j,Q)$ is vertical for $1 \leq j \leq r+1$, and
\item[(b)] $s_i \in \rxi(Q)$ if and only if $D(i,Q)$ lies strictly above $D(i+1,Q)$ for $1 \leq i \leq n$.
\end{itemize}

\noindent Extending this definition to pairs of domino tableaux, we let  $\rxi((P,Q)) := \rxi(Q)$.  The ordinary $\tau$-invariant $\tau(Q)$ and $\tau(P,Q)$ is defined by taking the intersection of this set with $S$.
\end{definition}

The following proposition relating the enhanced $\tau$-invariant for signed permutations and domino tableaux generalizes \cite[2.1.18]{garfinkle2} where it is stated for the regular $\tau$-invariant when $r=0$.   It is possible to verify it by carefully extending the original proof to domino tableaux of rank $r$.  Instead we will take a more streamlined approach using the maps $\Gamma_r$ which relate the domino insertion maps for tableaux of different ranks.

\begin{proposition}\label{proposition:enhanced tau}
Let $w \in W_n$ and suppose that $\mathcal{L}=\mathcal{L}(a,b)$ where $\frac{b}{a}$ is an integer and $r \geq \frac{b}{a}-1$.  Then $$\rxi(w) = \rxi(G_r(w)).$$
\end{proposition}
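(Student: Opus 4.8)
The plan is to induct on $r$, using the transition maps $\Gamma_r$ of Section~\ref{section:cycles} to reduce the general case to the base case $r \geq n-1$, where the domino insertion $G_r$ degenerates into the bitableau algorithm and the statement is essentially a direct computation. Concretely, observe first that as $r$ increases the constraint $r \geq \frac{b}{a}-1$ only gets easier to satisfy, so it is harmless to prove the identity $\rxi(w) = \rxi(G_r(w))$ for all $r \geq \frac{b}{a}-1$ simultaneously; and once $r$ is large (say $r \geq n-1$), the tableau $P_r(w)$ splits into a top part built from the ascents/positive entries and a bottom part built from the negative entries, the domino $D(i,Q_r(w))$ is horizontal at the top exactly when $w(i), w(i+1)$ appear in increasing order among the positives and vertical at the bottom exactly when $w(j)<0$, so items~(a) and~(b) of the tableau definition of $\rxi$ match items~(b) and~(a) of Proposition~\ref{proposition:enhanced tau}'s companion proposition on signed permutations. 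This handles the base case.

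For the inductive step I would show the two sides transform compatibly under $r \mapsto r+1$. By Theorem~\ref{proposition:MMT}, $G_{r+1}(w) = \Gamma_r(G_r(w)) = \MT(G_r(w),\gamma_{ST})$ where $(S,T) = G_r(w)$; since $\rxi$ of a pair depends only on the right tableau, what must be checked is that $\rxi(Q_r(w))$, computed at rank $r$, equals $\rxi\bigl(\MT(Q_r(w),\gamma(Q_r(w),P_r(w)))\bigr)$, computed at rank $r+1$. The extended cycle $\gamma(T,S)$ consists of $\corecycles(T)$ together with some non-core open cycles. Thus I need two facts about how moving through cycles affects the positions of the numbered dominos $D(1,\cdot),\dots,D(n,\cdot)$ relative to one another and relative to the (growing) core: (i) moving through the core cycles $\corecycles(T)$ changes neither which dominos $D(i)$ sit strictly above $D(i+1)$ nor, for $1 \le j \le r+1$, whether $D(j)$ is vertical — indeed the core cycles only slide dominos along the staircase boundary in a way that preserves verticality of the low-indexed ones and preserves all the relative above/below relations that enter clause~(b); and (ii) moving through a \emph{non-core open} cycle likewise preserves $\rxi$. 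Fact~(ii) is the crucial point and I expect it to follow because a non-core open cycle, by definition, does not change the number of squares and the combinatorics of such cycles (as developed in \cite{pietraho:rscore}, extending \cite{garfinkle1}) is known to preserve the $\tau$-invariant data; one cites or adapts the cycle-theoretic lemmas there, being careful that the enhanced part (verticality of $D(j)$ for $j \le r+1$) is also invariant — this uses that a non-core cycle cannot alter whether a low-indexed domino is vertical.

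Granting (i) and (ii), the inductive step is immediate: $\rxi(G_{r+1}(w)) = \rxi(\Gamma_r(G_r(w))) = \rxi(G_r(w)) = \rxi(w)$, the middle equality by (i)+(ii) and the last by the inductive hypothesis. The main obstacle is the verification of (i) and (ii) — that is, a clean statement and proof that moving through core and non-core open cycles leaves the (enhanced) $\tau$-invariant of a domino tableau unchanged. This is a purely combinatorial fact about the moving-through map and the diagonal positions of consecutively labeled dominos; the bookkeeping is routine in spirit but must be done with care at the staircase boundary, where the distinction between core cycles (which shift the boundary) and the clause "$1 \le j \le r+1$" interact. I would isolate this as a lemma, prove it by the standard case analysis on the local moves defining $\MT$ (as in \cite[\S2.2]{pietraho:rscore}), and then the proposition follows by the induction outlined above.
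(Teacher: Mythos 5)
Your architecture for the ordinary $\tau$-part is essentially the paper's: transport along $\Gamma_r$ using Theorem~\ref{proposition:MMT} together with a lemma asserting that moving through open cycles does not change which $D(i)$ lies strictly above $D(i+1)$ (this is exactly Lemma~\ref{lemma:tau:cycle}, and it does require the careful case analysis you defer; note also the necessary exclusion of closed cycles of the form $\{i,i+1\}$). The only real difference there is the anchor: you take the asymptotic case $r \geq n-1$ as base, where the descent criterion in the bitableau picture still needs a sign-by-sign verification, while the paper anchors at $r=0$ by citing Garfinkle \cite[2.1.18]{garfinkle2}. Either anchor works.

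The genuine gap is in your treatment of the enhanced part, i.e.\ the membership of the $t_j$. Your facts (i) and (ii) ask that moving through the extended cycles preserve the \emph{orientation} (vertical versus horizontal) of the dominos $D(j)$ for small $j$, and your justification for (ii) --- that a non-core open cycle ``cannot alter whether a low-indexed domino is vertical'' because such cycles ``preserve the $\tau$-invariant data'' --- conflates two different statements. Lemma~\ref{lemma:tau:cycle} concerns relative positions of consecutively labeled dominos, not orientations, and moving through an open cycle does in general flip the orientation of the dominos it contains: in Example~\ref{example:area}, the non-core cycle $\{4\}$ enters the extended cycle $\{2,4\}$ of $T$, and $D(4)$ is horizontal in $T$ but vertical in $T'$. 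So orientation-preservation for the labels $j \leq r+1$ is precisely what must be proved, not assumed; it is true, but because of a structural fact you have not used, namely that the restriction of $Q_r(w)$ to its core and the labels $1,\dots,r+1$ is a split tableau (the first $r+1$ insertions do not interact). The paper exploits this directly: Lemma~\ref{lemma:tfae} reads off, at rank $r$ itself, that $D(j,Q_r(w))$ is horizontal if and only if $w(j)>0$ if and only if $t_j \notin \rxi(w)$ for $j \leq r+1$, so the $t_j$-part needs no induction and no cycle analysis at all. To repair your argument you would either have to prove an orientation-invariance lemma for the low-indexed dominos under $\Gamma_r$ (which in practice reduces to the same split-structure observation applied at ranks $r$ and $r+1$), or simply replace your step (i)--(ii) for the $t_j$ by the direct insertion-level argument.
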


This proposition has the following immediate corollary for the regular $\tau$-invariant:
\begin{corollary} \label{proposition:tau} Let $w \in W_n$.  For all $r\geq 0$, we have
$$\tau(w) = \tau(G_r(w)).$$
\end{corollary}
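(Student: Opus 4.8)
The plan is to read this off Proposition~\ref{proposition:enhanced tau} by a judicious choice of weight function followed by intersection with the set $S$ of simple reflections. The key point to keep in mind is that the map $G_r$ is defined for every $r \geq 0$ with no reference to a weight function, whereas the hypotheses of Proposition~\ref{proposition:enhanced tau} involve one; so a weight function is introduced here purely as a vehicle, and will impose no constraint on $r$.

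Fix $r \geq 0$ and take $\mathcal{L} = \mathcal{L}(1,\,r+1)$, so that $\tfrac{b}{a} = r+1 \in \mathbb{N}$ and $r \geq \tfrac{b}{a}-1$ (with equality). Proposition~\ref{proposition:enhanced tau} then gives $\rxi(w) = \rxi(G_r(w))$ for all $w \in W_n$, both sides understood with respect to this $\mathcal{L}$. Intersecting with $S$ yields, on the tableau side, $\rxi(G_r(w)) \cap S = \tau(G_r(w))$ directly from the definition $\tau(P,Q) := \rxi(P,Q) \cap S$. On the group side I would check that $\rxi(w) \cap S = \tau(w)$: since $\tau(w) \subseteq S$, it suffices to see that the additional elements $\{\, t_j \mid \tfrac{b}{a} > j-1,\ \ell(wt_j) < \ell(w) \,\}$ contribute nothing after intersecting with $S$. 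This is because $t_j = s_{j-1}\cdots s_1\, t\, s_1 \cdots s_{j-1} = (1\,2\,\ldots\,\overline{j}\,\ldots\,n)$ is not a simple reflection for $j \geq 2$, while for $j=1$ one has $t_1 = t \in S$ and $t_1$ lies in that set exactly when $\ell(wt) < \ell(w)$, i.e.\ when $t \in \tau(w)$ already. Combining, $\tau(w) = \rxi(w) \cap S = \rxi(G_r(w)) \cap S = \tau(G_r(w))$, and as $r$ was arbitrary the statement follows.

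There is no real obstacle here; the mathematical content is entirely supplied by Proposition~\ref{proposition:enhanced tau}. The only point demanding (light) care is the verification in the previous paragraph that passing from the enhanced $\tau$-invariant to the ordinary one — i.e.\ intersecting with $S$ — is compatible with the equality $\rxi(w) = \rxi(G_r(w))$, so that one genuinely recovers $\tau$ on both sides rather than some larger set; this reduces to the elementary observation about the shape of the signed permutation $t_j$ recorded above.
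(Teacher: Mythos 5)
Your proof is correct and takes essentially the same approach as the paper, which states the corollary as an immediate consequence of Proposition~\ref{proposition:enhanced tau}: specializing to the weight function $\mathcal{L}(1,r+1)$ and intersecting both sides of $\rxi(w)=\rxi(G_r(w))$ with $S$ (noting $t_j\notin S$ for $j\geq 2$ and that $t_1\in S$ contributes nothing new) is exactly the routine verification the paper leaves implicit. One small remark: the ordinary-$\tau$ statement is also obtained directly, for all $r\geq 0$ and without reference to any weight function, in the first paragraph of the paper's proof of Proposition~\ref{proposition:enhanced tau} via Garfinkle's rank-zero result and Lemma~\ref{lemma:tau:cycle}, so your choice of parameters is a convenience rather than a necessity.
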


Our proof relies on two intermediate results.  The first will help us understand the effect of the maps $\Gamma_r$ on the ordinary $\tau$-invariant.

\begin{lemma}\label{lemma:tau:cycle}  Suppose that $T$ is a standard domino tableau and $c$ is either an open cycle in $T$ or a closed cycle not of the form $c=\{i,i+1\}$.  Then $\tau(T) = \tau(\MT(T,c))$.  The same holds for corresponding opposite cycles in $T$ as long as $T$ is not a standard domino tableau of rank $0$.
\end{lemma}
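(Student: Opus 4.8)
The statement asserts that moving through a cycle $c$ in a standard domino tableau $T$ preserves the ordinary $\tau$-invariant $\tau(\cdot)$ (the part drawn from $S$), provided $c$ is open, or closed but not of the shape $\{i,i+1\}$; and the same for opposite cycles as long as $T$ has positive rank. Recall that $s_i\in\tau(T)$ precisely when $D(i,T)$ lies strictly above $D(i+1,T)$. So the whole claim reduces to a local statement about the two dominos $D(i,\cdot)$ and $D(i+1,\cdot)$: for each $i$ with $1\le i\le n-1$, I must show that the relative vertical position of these two dominos is unchanged after moving through $c$. The plan is to reduce to three cases according to how many of $i,i+1$ lie in the cycle $c$.

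\textbf{Key steps.} First, if neither $i$ nor $i+1$ belongs to $c$, then moving through $c$ only repositions the squares carrying labels in $c$, and — this is the point that needs a short argument from the definition of moving through in \cite[\S2.2]{pietraho:rscore} — the dominos $D(i,T)$ and $D(i+1,T)$ occupy exactly the same squares in $\MT(T,c)$. Indeed, moving through a cycle permutes the variable squares among themselves, so squares carrying labels outside $c$ are untouched, and hence the ``strictly above'' relation between $D(i,\cdot)$ and $D(i+1,\cdot)$ is literally the same. Second, if exactly one of $i,i+1$ lies in $c$: I will invoke the standardness of the output (Proposition-level facts from Section~\ref{section:cycles} guarantee $\MT(T,c)$ is again a standard domino tableau when $c$ is a cycle, resp.\ opposite cycle on a positive-rank tableau), so in both $T$ and $\MT(T,c)$ the labels increase along rows and columns; this forces $D(i,\cdot)$ to sit weakly above and weakly left of $D(i+1,\cdot)$, and I then check that the strict-above alternative versus the not-strict-above alternative is determined by data that the single moved domino cannot flip — concretely, by tracking the relevant boundary square of the moved domino against the fixed domino, using that moving through slides a domino by one position in a controlled way. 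Third, if both $i$ and $i+1$ lie in $c$: this is where the hypothesis on $c$ is used. If $c$ is open, moving through it changes the shape, and the sequence of dominos $D(j,T)$ for $j\in c$ shifts coherently along the cycle; the pair $D(i,\cdot),D(i+1,\cdot)$ are consecutive in the cycle and their relative position is preserved by the ``snake'' structure of an open cycle. If $c$ is closed but $c\ne\{i,i+1\}$, then $c$ contains some label other than $i$ and $i+1$, so $i,i+1$ are again genuinely interior consecutive entries of the cycle and the same coherence argument applies; the excluded case $c=\{i,i+1\}$ is exactly the one where moving through swaps the two dominos' positions (a horizontal-to-vertical flip of the $2\times2$ block they fill) and genuinely changes whether $s_i\in\tau$. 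For opposite cycles one repeats the argument verbatim with $\MTop$ in place of $\MT$, the rank-$0$ exclusion being precisely what is needed to keep $\MTop(T,c)$ a legitimate standard domino tableau.

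\textbf{Main obstacle.} The conceptual content is minimal — it is all bookkeeping — but the genuine work is the second case, where exactly one of $i,i+1$ is moved: one must show the single moved domino cannot change from being strictly-above to not-strictly-above (or vice versa) relative to the fixed one. The cleanest way to handle this, and the step I expect to occupy most of the write-up, is to recall from \cite[\S2.2]{pietraho:rscore} the explicit description of how a domino's position changes under moving through (it moves to an adjacent position dictated by the fixed squares and the orientation of the next domino in the cycle) and to combine this with the fact that $\MT(T,c)$ is again standard, so that the pair $(D(i,\cdot),D(i+1,\cdot))$ must at all times respect the row/column-increasing condition. One then enumerates the finitely many local configurations (horizontal/vertical orientation of each of the two dominos, their relative row positions) and checks the relation is invariant in each. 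I would also note explicitly at the start that the definition of $\tau$ via Definition after Proposition~\ref{proposition:enhanced tau} makes sense for arbitrary standard domino tableaux (not just rank-$r$ ones with a staircase core), which is why the lemma can be stated in that generality.
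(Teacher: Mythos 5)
Your reduction of the lemma to a statement about the pair $D(i,\cdot),D(i+1,\cdot)$ drops part of the invariant: $\tau(T)$ also records whether $t=t_1$ lies in it, i.e.\ whether $D(1,T)$ is vertical, and this is exactly where the opposite-cycle proviso lives. Your closing explanation of the rank-$0$ exclusion (``needed to keep $\MTop(T,c)$ a legitimate standard domino tableau'') is not the right reason: take the rank-$0$ tableau whose single domino is vertical on $\{s_{11},s_{21}\}$; moving through its opposite (open) cycle produces the perfectly standard horizontal domino on $\{s_{11},s_{12}\}$, yet $\tau$ changes from $\{t\}$ to $\varnothing$. The point is that for rank $\geq 1$ the square $s_{11}$ is a core square, so $D(1,T)$ cannot pivot around it, whereas at rank $0$ with opposite cycles it can; the paper's proof isolates precisely this exception when treating $t\in\tau(T)$. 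Your write-up never treats $t$ at all, so the hypothesis you are supposed to be using is attached to the wrong mechanism.

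The two cases that carry all the content are also not proved. In the case where both $i,i+1\in c$, the assertion that they are ``consecutive in the cycle'' and that a ``snake/coherence'' property preserves their relative position is not something you can cite: labels belonging to the same cycle need not be adjacent in its chain structure, and no coherence statement of this kind appears in \cite{pietraho:rscore} or \cite{garfinkle1}. In the case where exactly one of $i,i+1$ moves, you explicitly defer the enumeration, which is the heart of the matter. Note how the paper avoids your three-way case split altogether: assuming $s_i\in\tau(T)$ but $s_i\notin\tau(\MT(T,c))$, the fact that each domino retains its fixed square under moving through forces $D(i,T')$ and $D(i+1,T')$ into one of four adjacent configurations; the first contradicts $s_i\in\tau(T)$ via the fixed squares, the middle two violate standardness, and the last forces $s_{kl}$ to be fixed and hence $c=\{i,i+1\}$ closed, which is excluded --- and the involution property of moving through upgrades the inclusion $\tau(T)\subset\tau(T')$ to equality. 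If you want to salvage your outline, you must (i) add the $D(1,T)$ analysis with the correct role of the rank-$0$/opposite-cycle hypothesis, and (ii) replace the coherence appeal by an actual local argument, for which the fixed-square-retention property is the indispensable tool; once you use it, the case split on membership of $i,i+1$ in $c$ becomes unnecessary.
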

\begin{proof}
This is a generalization of \cite[3.1.4]{garfinkle3}, allowing both general rank tableaux as well as certain core open cycles, which the original result omits.  Thus consider a cycle or opposite cycle $c$ in $T$ which satisfies the conditions of the proposition.  Let $T' = \MT(T,c)$ or $T' = \MTop(T,c)$ . We will show that $\tau(T) \subset \tau(T')$.  Since moving through is an involution, this will suffice.

First suppose that $t \in \tau(T)$.  Then $D(1,T)$ is vertical, and unless $s_{11} \in D(1,T)$ and $c$ is an opposite cycle, so will be $D(1,T')$.  Thus $t \in \tau(T')$.  If $s_i \in \tau(T)$, then we have to examine the relative positions of $D(i,T)$ and $D(i+1,T)$.
If we assume that in fact $s_i \notin \tau(T')$, then $D(i,T')$ and $D(i+1,T')$ must be adjacent and their relative positions must be one of the following:

\begin{tiny}
$$
\hspace{-.2in}
\hspace{.1in}
\begin{tableau}
:>{i}>{i'}\\
:;\\
\end{tableau}
\hspace{.4in}
\hspace{.1in}
\begin{tableau}
:>{i}^{i'}\\
:;\\
:;\\
\end{tableau}
\hspace{.4in}
\hspace{.1in}
\begin{tableau}
:^{i}\\
:;^{i'}\\
:;\\
\end{tableau}
\hspace{.4in}
\hspace{.1in}
\begin{tableau}
:^{i}^{i'}\\
:;\\
:;\\
\end{tableau}
$$
\end{tiny}
\hspace{-.12in}
where we have written $i':=i+1$.  We eliminate each possibility successively from left to right, writing $s_{kl}$ for the top, leftmost square of $D(i,T')$. The first possibility implies that the fixed squares with labels $i$ and $i+1$ lie in the same row in both $T$ and $T'$, which contradicts our original assumption that $s_i \in \tau(T)$.  The second and third imply impossible labellings of a standard domino tableaux: $s_{k+1,l+1}$ in the second and $s_{k,l+1}$ in the third would have to be labeled with a integer between $i$ and $i+1$.  For the final diagram to have arisen from a tableau $T$ with $s_i \in \tau(T)$, $s_{kl}$ must be a fixed square.  But in this case, $c = \{i,i+1\}$ is a closed cycle, which we have excluded from consideration.
\end{proof}

The following statement resolves the inclusion of the $t_j$ in the enhanced $\tau$-invariant for domino tableaux of sufficiently large rank.  It follows directly from the definition of the domino insertion maps $G_r$.

\begin{lemma}\label{lemma:tfae}
Let $w \in W_n$ and suppose that $\mathcal{L}=\mathcal{L}(a,b)$ where $\frac{b}{a}$ is an integer and $r \geq \frac{b}{a}-1$.  When $k \leq r+1$, the following statements are equivalent for all $1 \leq j \leq k$:
\begin{itemize}
\item[(a)] $D(w(j),P^k(w))$ is horizontal,
\item[(b)] $D(j,Q^k(w))$ is horizontal,
\item[(c)] $w(j) > 0$, and
\item[(d)] $t_j \notin \rxi(w)$.
\end{itemize}
\end{lemma}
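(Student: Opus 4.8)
The plan is to prove the chain of equivalences by establishing (c)$\Leftrightarrow$(d), (c)$\Rightarrow$(a), (a)$\Rightarrow$(b), and (b)$\Rightarrow$(c), all by induction on the number of insertion steps $k$, exploiting the structure of the domino insertion algorithm $G_r$ together with the description of $\rxi$ in terms of the signed permutation from Proposition (the one with parts (a),(b) on lengths). The equivalence (c)$\Leftrightarrow$(d) is immediate: by the displayed length criterion, $t_j \in \rxi(w)$ forces $\tfrac ba > j-1$, i.e. $j \le r+1$, which is guaranteed by hypothesis for $1\le j\le k\le r+1$, so membership of $t_j$ in $\rxi(w)$ is governed solely by the condition $\ell(wt_j)<\ell(w)\Leftrightarrow w(j)<0$. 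Hence $t_j\notin\rxi(w)\Leftrightarrow w(j)>0$.

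For the core implications I would set up an induction on $k$, with the key observation being that because $r\ge \tfrac ba -1$ and $k\le r+1$, the staircase $\lambda_r$ is large enough that the insertion of the first $k$ letters behaves like the bitableau algorithm: positive letters enter as horizontal dominos inserted from the top rows and negative letters enter as vertical dominos inserted from the bottom, and crucially the domino $D(w(j),P^k(w))$ carrying the label of the $j$-th inserted letter retains the orientation (horizontal for $w(j)>0$, vertical for $w(j)<0$) it had when first placed, since no subsequent bump within the first $k$ steps can rotate a domino lying above the diagonal $\delta_{r+2}$. This gives (c)$\Rightarrow$(a) directly, and its converse (a)$\Rightarrow$(c) follows because a horizontal domino with a "negative" label would have had to be created by the vertical-insertion branch and then rotated, which the rank condition forbids. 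The equivalence (a)$\Leftrightarrow$(b) is then the statement that the left tableau $P^k(w)$ and the right tableau $Q^k(w)$ have the same shape at every stage, so that $D(j,Q^k(w))$ and $D(w(j),P^k(w))$ occupy the same cells and hence the same orientation — indeed $Q_r$ records the growth of $P_r$, and the domino added at step $j$ to $Q^k$ is placed in exactly the cells occupied by $D(w(j),P^j(w))$, which by the no-rotation observation are still the cells of $D(w(j),P^k(w))$.

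The main obstacle, and the step deserving the most care, is justifying the "no rotation above $\delta_{r+2}$" claim precisely: one must check that during the insertion of letters with index $\le k\le r+1$ into a tableau whose core is $\lambda_r$, every bump that could in principle rotate a horizontal domino into a vertical one (or vice versa) occurs only on or below the diagonal $\delta_{r+2}$, which no domino indexed $\le r+1$ ever reaches. I would handle this by tracking, at step $j$, that the dominos $D(1,P^j(w)),\dots,D(j,P^j(w))$ all lie in the region strictly above $\delta_{r+2}$ (the "split" region of Definition \ref{definition:split}), so the interaction of positive and negative insertions seen in the bitableau case applies verbatim. Alternatively — and this is probably cleaner — I would invoke directly the known description of $G_r$ for split tableaux recalled just before Definition \ref{definition:split}: positive values insert as horizontal dominos at the top, negative values as vertical dominos at the bottom, and the two families do not interact, which is exactly the statement that (a), (b), (c) are equivalent term by term. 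In that case the lemma reduces to packaging this known behaviour together with the trivial equivalence (c)$\Leftrightarrow$(d).
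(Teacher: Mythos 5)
Your proposal is correct and takes essentially the same route as the paper: the paper offers no argument beyond asserting that the lemma ``follows directly from the definition of the domino insertion maps $G_r$,'' and your reduction of the first $k\le r+1$ insertion steps into the staircase core $\lambda_r$ to the non-interacting bitableau regime (positives stay horizontal, negatives stay vertical, and the recording domino added at step $j$ has the same orientation), together with the trivial unwinding of the definition of $\rxi$, is exactly the intended justification, implicit also in the splitness remark in the proof of Proposition~\ref{proposition:enhanced tau}. Two small caveats that do not change the verdict: the precise non-interaction statement is that the horizontal (positive) and vertical (negative) families occupy disjoint sets of rows and columns because a meeting would force a square $s_{ij}$ with $i+j\ge r+2$ while only $k\le r+1$ dominoes are present (first-row horizontal dominoes do meet $\delta_{r+2}$, so ``strictly above $\delta_{r+2}$'' is slightly off), and your (c)$\Leftrightarrow$(d) step tacitly takes $\tfrac{b}{a}=r+1$, i.e.\ the paper's standing convention $r=\tfrac{b}{a}-1$, since for $r>\tfrac{b}{a}-1$ the condition $\tfrac{b}{a}>j-1$ is not automatic for all $j\le r+1$ --- a wrinkle already present in the lemma's own statement rather than a defect of your argument.
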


%
%

\begin{proof}[Proof of Proposition~\ref{proposition:enhanced tau}]

The original version of this result \cite[2.1.9]{garfinkle2} gives us the desired statement for the ordinary $\tau$-invariant and the map $G_0$.  Since for a domino tableau pair $(P,Q)$ of rank $r$, the rank $r+1$ tableaux $\Gamma_r(P,Q)$ are defined by moving through a set of open cycles in $P$ and $Q$, and by Proposition \ref{proposition:MMT}, $G_{r+1} = \Gamma_r \circ G_r$ for all $r$,  Lemma \ref{lemma:tau:cycle} implies that our result holds for the ordinary $\tau$-invariant for all maps $G_r$.

To verify the proposition for the enhanced $\tau$-invariant, we need only compare membership of $t_j$ for $2 \leq j \leq r+1$ in $\rxi(w)$ and $\rxi(G_r(w))$.  Note that the restriction of the recording tableau $Q_r(w)$ to its core of rank $r$ and the dominos with labels in the set $\{1,2,\ldots,r+1\}$ is a split domino tableau. We may now apply Lemma~\ref{lemma:tfae} to conclude the proof.
\end{proof}


\section{Intermediate cells}\label{section:intermediate}

We are ready to address the conjecture of Bonnaf\'e, Geck, Iancu, and Lam.
Using the inverse of the rank-increasing map $\Gamma_r$ on tableaux pairs, we first describe the relationship between  Kazhdan-Lusztig asymptotic left cells and  combinatorial left intermediate cells, that is, cells for the parameter $r = n-2$.  We show that each  combinatorial left intermediate cell is either an asymptotic left cell itself or a union of two such cells.  When reconciled with the results on Kazhdan-Lusztig left intermediate cells in \cite{howse:vogan}, this allows us to deduce Conjecture \ref{conjecture:bgil} in this setting.

Throughout most of this section we will work in  a more general environment obtaining results for certain combinatorial left cells for all values of $r$. We begin with two examples that illustrate the issues involved.


\subsection{Examples}
The first example concerns a set of tableaux for which rank-$r$ and rank-$(r+1)$ left combinatorial cells coincide.  In the second example, rank-$r$ left combinatorial cells are unions of rank-$(r+1)$ left combinatorial cells.

\begin{example}\label{example:nonarea}
Consider the following tableaux pair $(S,T) \in \SSDT_2(5)$:

\begin{tiny}
$$
\raisebox{.10in}{{\normalsize $S =$}}
\hspace{.1in}
\begin{tableau}
:.{}.{}>{1}>4\\
:.{}>{3}>5\\
:^2\\
\end{tableau}
\hspace{.3in}
\raisebox{.1in}{{\normalsize $T =$}}
\hspace{.1in}
\begin{tableau}
:.{}.{}>{3}>4\\
:.{}>{2}>5\\
:^1\\
\end{tableau}.
$$
\end{tiny}
\vspace{.05in}

\noindent
For both tableaux, each row of horizontal dominos and each column of vertical dominos forms a core  open cycle. Each also forms its own extended cycle.  In fact, recalling notation from Section \ref{section:dominoinsertion} for extended cycles,
$\gamma(S,T) = \{\{1,4\},\{2\},\{3,5\}\}$ and $ \gamma(T,S) = \{\{1\},\{2,5\},\{3,4\}\}.$
Consequently, $\Gamma_2(S,T) = (S',T')$ where:

\vspace{.1in}
\begin{tiny}
$$
\raisebox{.17in}{{\normalsize $S' =$}}
\hspace{.1in}
\begin{tableau}
:.{}.{}.{}>{1}>4\\
:.{}.{}>{3}>5\\
:.{}\\
:^2\\
\end{tableau}
\hspace{.3in}
\raisebox{.17in}{{ \normalsize and \hspace{.2in}  $T' =$}}
\hspace{.1in}
\begin{tableau}
:.{}.{}.{}>{3}>4\\
:.{}.{}>{2}>5\\
:.{}\\
:^1\\
\end{tableau}.
$$
\end{tiny}

\noindent
It is easy to see that the extended cycles $\gamma(T,S)$ in $T$ do not depend on $S$.
The tableaux $T$ and $T'$  contain no non-core open cycles, thus $\mathcal{C}(T)$ is a combinatorial left 2-cell and $\mathcal{C}(T')$ is a combinatorial left  3-cell.  Proposition \ref{proposition:MTDelta} in fact implies that  we have equality $\mathcal{C}(T) = \mathcal{C}(T')$.
\end{example}

\begin{example} \label{example:area}
Next, consider the tableaux pair $(S,T) \in \SSDT_2(4)$:

\vspace{.1in}

\begin{tiny}
$$
\raisebox{.1in}{{\normalsize $S =$}}
\hspace{.1in}
\begin{tableau}
:.{}.{}>{1}\\
:.{}^3^4\\
:^2\\
\end{tableau}
\hspace{.3in}
\raisebox{.1in}{{\normalsize  $T =$}}
\hspace{.1in}
\begin{tableau}
:.{}.{}>{1}\\
:.{}>2\\
:^3>4\\
\end{tableau}.
$$
\end{tiny}

\noindent
In both tableaux, each domino forms its own open cycle.  Extended cycles are more intricate with
$\gamma(S,T) = \{\{1\},\{2\},\{3,4\}\}$ and $\gamma(T,S) = \{\{1\},\{2,4\},\{3\}\}.$
Consequently  $\Gamma_2(S,T) = (S',T')$ where

\vspace{.1in}

\begin{tiny}
$$
\raisebox{.17in}{{\normalsize $S' =$}}
\hspace{.1in}
\begin{tableau}
:.{}.{}.{}>{1}\\
:.{}.{}>4\\
:.{}^3\\
:^2\\
\end{tableau}
\hspace{.3in}
\raisebox{.17in}{{\normalsize and \hspace{.2in} $T' =$}}
\hspace{.1in}
\begin{tableau}
:.{}.{}.{}>{1}\\
:.{}.{}>2\\
:.{}^4\\
:^3\\
\end{tableau}.
$$
\end{tiny}

\noindent
This time, the extended cycles in $\gamma(T,S)$ in $T$ do depend on the tableau $S$.
Note that since $\{4\}$ is an open cycle in $T$, then it is also an opposite open cycle in $T'$ and write $\overline{T} = \MT(T, \{4\})$ as well as $\overline{T'} = \MTop(T', \{4\})$.  Since $\{4\}$ is the only non-core open cycle in $T$, we know that $\mathcal{C}(T) \cup \mathcal{C}(\overline{T})$ is a  combinatorial left 2-cell.  Neither $T'$ nor $\overline{T'}$ have non-core open cycles, thus
$\mathcal{C}(T')$ and  $\mathcal{C}(\overline{T'})$ each form a combinatorial left 3-cell.  By Proposition \ref{proposition:MTDelta},
$$\mathcal{C}(T) \cup \mathcal{C}(\overline{T}) = \mathcal{C}(T') \cup \mathcal{C}(\overline{T'})$$
which we read as a decomposition of a combinatorial left 2-cell into two  combinatorial left 3-cells.
\end{example}


\subsection{A partition of $W_n$}\label{subsection:partition}  Domino tableaux that are split carry a particularly simple cycle structure.  A split domino tableau $T \in \SDT_r(n)$ consists of rows of horizontal and columns of vertical dominos.  The labels of the dominos in each such row and column form the complete set of its cycles.  Each is a core cycle and consequently open. Since $T$ has no non-core cycles, the set $\mathcal{C}(T)$ is in fact a left combinatorial $r$-cell.  Further, if we let $T' =\MT(T,\corecycles(T))$, then $T' \in \SDT_{r+1}(n)$ is also split and by Proposition \ref{proposition:MTDelta},
$\mathcal{C}(T) = \mathcal{C}(T')$.  This process can be repeated with $T'$ and iterated, eventually showing that:

\begin{proposition}\label{proposition:split}  $\mathcal{C}(T)$ is a left asymptotic cell whenever $T \in \SDT_r(n)$ is split.
\end{proposition}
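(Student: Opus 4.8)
The plan is to bootstrap from Proposition~\ref{proposition:MTDelta} together with the elementary observation that "split" is preserved under the rank-raising operation $\MT(-,\corecycles(-))$, and then iterate until the rank exceeds $n-1$, where all combinatorial left cells are by definition asymptotic.

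First I would record the cycle structure of a split tableau carefully. If $T \in \SDT_r(n)$ is split, then by Definition~\ref{definition:split} its diagonal $\delta_{r+2}$ contains an empty square; combined with the shape constraints on a standard domino tableau of rank $r$, this forces every domino of $T$ to be either a horizontal domino sitting in a row entirely of horizontal dominos or a vertical domino sitting in a column entirely of vertical dominos, and these rows and columns are exactly the cycles of $T$. Each such cycle changes the shape when moved through (a horizontal row slides right, a vertical column slides down), and in fact changes the number of squares — so every cycle of $T$ is a core cycle. The upshot is $\noncorecycles(T) = \emptyset$. Hence the union in the definition of $\mathcal{C}_r(w)$ collapses to a single term, so $\mathcal{C}(T)$ is itself a left combinatorial $r$-cell.

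Next I would verify that splitness is inherited. Let $T' := \MT(T, \corecycles(T))$. Moving through all the core cycles simultaneously produces a tableau in $\SDT_{r+1}(n)$ (as noted in the excerpt following Example~\ref{ex:cycles}); since $T$ consisted of full rows of horizontal dominos and full columns of vertical dominos, $T'$ does too, merely shifted to sit on the larger staircase core $\lambda_{r+1}$, and in particular $T'$ is again split. Now apply Proposition~\ref{proposition:MTDelta} in the degenerate case $\noncorecycles(T) = \emptyset$: the only subset $U$ of the empty set is $U = \emptyset$, so the proposition reads simply $\mathcal{C}(T) = \mathcal{C}(T')$. Iterating, $\mathcal{C}(T) = \mathcal{C}(T^{(1)}) = \mathcal{C}(T^{(2)}) = \cdots$, where $T^{(k)} \in \SDT_{r+k}(n)$ is split for all $k$. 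Once $r + k \geq n-1$, the class $\mathcal{C}(T^{(k)})$ is a left combinatorial $(r+k)$-cell — still a single $\mathcal{C}(\cdot)$ since $T^{(k)}$ is split — and by the remark in Section~\ref{section:combinatorial cells} (all combinatorial left cells stabilize for $r \geq n-1$) it is an asymptotic cell. Therefore $\mathcal{C}(T)$, being equal to it, is a left asymptotic cell.

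The only genuinely substantive point is the structural claim about split tableaux — that a split $T$ has no non-core cycles and that splitness propagates under $\MT(-,\corecycles(-))$ — and this is really just unwinding the combinatorics of dominos on a staircase core. Everything downstream is a formal iteration of Proposition~\ref{proposition:MTDelta}, so I expect no serious obstacle; the main care needed is to make sure the description of a split tableau as "rows of horizontal dominos and columns of vertical dominos" is justified from Definition~\ref{definition:split} rather than merely asserted.
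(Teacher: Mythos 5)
Your proposal is correct and follows essentially the same route as the paper, which proves this proposition in the paragraph preceding its statement: a split tableau consists of full rows of horizontal and full columns of vertical dominos whose labels form only core open cycles, so $\mathcal{C}(T)$ is a single combinatorial left $r$-cell; splitness is preserved under $T \mapsto \MT(T,\corecycles(T))$, Proposition~\ref{proposition:MTDelta} in the degenerate case $\noncorecycles(T)=\varnothing$ gives $\mathcal{C}(T)=\mathcal{C}(T')$, and iterating reaches the asymptotic range $r \geq n-1$. The only difference is cosmetic: you flag the structural description of split tableaux as the point requiring justification, while the paper simply asserts it.
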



For any $w \in W_n$ and $r \geq n-1$, the tableau pair $G_r(w)$ is always split.  Let $s(w)$ be the smallest integer such that $G_{s(w)}(w)$ is split and define $W_n^k$ to be the set of $w$ for which $s(w) = k$. Then:
$$W_n = \bigsqcup_{k=0}^{n-1} W_n^k.$$

\begin{definition}  We will say that $w$ is {\it non-split} if  $w \in W_n^{n-1}$ and {\it split} otherwise.  Extending this notion slightly, we will say $w \in W_n^k$ is {\it $k$-split} for $k < n-1$.
\end{definition}

The set of non-split elements in $W_n$  admits a simple description as detailed in \cite{howse:vogan}.  We reproduce it as part of the following proposition, whose proof follows directly from the definition of the map $G_{n-1}$.

\begin{proposition} Consider the signed permutation $w = w(1) w(2) \ldots w(n) \in W_n$ as a sequence, writing $x_1, \ldots, x_q$ for the subsequence of negative integers as well as $y_1, \ldots, y_{n-q}$ for the subsequence of positive integers contained therein.  We will say $w \in \area_n$ if and only if $\{|x_i|\}_i $and $\{y_i\}_i$ form decreasing sequences. Then $$W_n^{n-1} = \area_n.$$
\end{proposition}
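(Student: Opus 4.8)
The plan is to unwind the definition of the map $G_{n-1}$ and track exactly when the diagonal $\delta_{r+2}$ first acquires an empty square. Recall that for $r \geq n-1$ the insertion $G_r$ degenerates into the bitableau algorithm: positive entries enter as horizontal dominos accumulating at the top of the shape, negative entries as vertical dominos accumulating at the bottom, and the two families never interact. So the statement $W_n^{n-1} = \area_n$ amounts to: $G_{n-1}(w)$ is split (i.e.\ $\delta_{r+2}$ has an empty square at rank $r = n-2$, hence so does every larger rank — wait, we want the smallest $k$ with $G_k(w)$ split to be strictly less than $n-1$ precisely when $w \notin \area_n$). Concretely, I would prove the contrapositive of each inclusion: $w \notin \area_n \iff s(w) \leq n-2 \iff G_{n-2}(w)$ is already split.

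First I would analyze the insertion $G_{n-2}$ directly. At rank $r = n-2$ the core is $\lambda_{n-2}$, a staircase with $n-1$ rows, and the diagonal $\delta_{r+2} = \delta_n$ is the antidiagonal just outside the core. The key observation, following from the definition of the domino-insertion bumping rules as recounted in Section~\ref{section:dominoinsertion} and used already in the proof of Proposition~\ref{proposition:enhanced tau}, is that when we insert the entries of $w$ one at a time, a positive entry $w(j)$ produces a horizontal domino that "slots in" without bumping precisely when the positive subsequence read so far is decreasing, and symmetrically for negative entries and vertical dominos; any failure of monotonicity forces a bump that pushes a domino across the antidiagonal $\delta_n$, filling the square that would otherwise be empty. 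Thus $G_{n-2}(w)$ fails to be split — every square of $\delta_n$ is occupied — exactly when the insertion process was forced to bump across $\delta_n$ in both the positive and the negative family, which happens exactly when neither $\{|x_i|\}$ nor $\{y_i\}$ is decreasing; and more care shows that a single monotonicity failure in either family already forces $\delta_n$ full, so $G_{n-2}(w)$ is non-split iff $w \notin \area_n$. This gives $\area_n \subseteq W_n^{n-1}$ once we also check $w \in \area_n \Rightarrow G_{n-1}(w)$ is the first split one and not $G_{n-2}(w)$, which is immediate from the staircase-count: for $w \in \area_n$ the shape of $G_{n-2}(w)$ is forced to be a single hook-like configuration whose diagonal $\delta_n$ is completely filled by exactly the two "arms", so it is non-split, while at rank $n-1$ the extra core row creates the empty square.

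To package this cleanly I would instead argue via Proposition~\ref{proposition:MTDelta} and the cycle structure, avoiding a long hand-computation with bumping. Namely: $w$ is split ($w \in W_n^k$ for $k < n-1$) iff $G_k(w)$ has a non-core open cycle for some $k < n-1$ — equivalently iff the rank-$(n-2)$ tableau pair $G_{n-2}(w)$ already has an empty square on $\delta_n$, by the discussion preceding Proposition~\ref{proposition:split} that split tableaux are stable under $\Gamma$ and that $s(w)$ is where the shape stabilizes into pure rows-and-columns form. Then one identifies, combinatorially, that the shape of $P_{n-2}(w)$ has every square of $\delta_n$ filled iff the horizontal dominos (from positive entries) reach down to the antidiagonal and the vertical dominos (from negative entries) reach up to it, which by the bumping analysis is exactly the condition that the positive and negative subsequences each be decreasing. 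I expect the main obstacle to be this last combinatorial identification: making precise, with a clean inductive statement on $G_{n-1}^k(w)$ (the tableaux after $k$ insertion steps), the claim that a monotonicity violation in the positive subsequence forces a horizontal domino onto $\delta_n$, and dually for the negative subsequence — together with the boundary bookkeeping that the core $\lambda_{n-2}$ has just enough rows that "decreasing" is the precise threshold. Once that lemma is in hand, both inclusions $W_n^{n-1} \subseteq \area_n$ and $\area_n \subseteq W_n^{n-1}$ follow by taking contrapositives.
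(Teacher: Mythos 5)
Your reduction of the statement to ``$G_{n-2}(w)$ is non-split if and only if $w\in \area_n$'' (via the stability of splitness under $\Gamma_r$ from the discussion preceding Proposition~\ref{proposition:split}) is the right starting point, but the bumping analysis you then build on is inverted, and this reverses your conclusion. In the domino insertion a positive entry is appended to row $1$ \emph{without} bumping precisely when it exceeds the positive entries inserted so far; it is a \emph{decreasing} positive subsequence that forces a bump at every step, and similarly for the negative entries. Hence for $w\in\area_n$ the $n$ dominoes end up one per row (horizontals) and one per column (verticals) along the core boundary, each covering exactly one of the $n$ pairwise non-adjacent squares of $\delta_n$, so the diagonal is full and $G_{n-2}(w)$ is non-split; conversely an ascent in either subsequence puts two horizontal dominoes in one row (or two verticals in one column), and since at most one domino per row or column can meet $\delta_n$ while only $n$ dominoes are available for the $n$ squares of $\delta_n$, some square of $\delta_n$ stays empty and $G_{n-2}(w)$ is split. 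Your first paragraph asserts the exact opposite (``slots in without bumping precisely when the positive subsequence read so far is decreasing'', ``any failure of monotonicity forces a bump that pushes a domino across $\delta_n$''), and culminates in ``$G_{n-2}(w)$ is non-split iff $w\notin\area_n$'', which is the negation of the proposition. The identity element is a concrete counterexample to your claim that ``a single monotonicity failure in either family already forces $\delta_n$ full'': all of its dominoes lie in row $1$, $\delta_n$ is nearly empty, and $s(\mathrm{id})=0$.

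Your later paragraphs then assert the correct direction (for $w\in\area_n$ the shape of $G_{n-2}(w)$ is non-split), so the writeup contradicts itself, and in the cycle-theoretic repackaging the key step is again stated backwards (``a monotonicity violation in the positive subsequence forces a horizontal domino onto $\delta_n$'') and is explicitly deferred as ``the main obstacle''. Thus the one piece of genuine content the proposition requires --- the equivalence between both subsequences being decreasing and every domino of the common shape of $G_{n-2}(w)$ meeting $\delta_n$ --- is both misstated and left unproved. The paper treats this as immediate from the definition of the insertion map, citing \cite{howse:vogan}. If you want to write it out, the counting observation above is the clean route: a non-split rank-$(n-2)$ shape with $n$ dominoes must contain all of $\delta_{n-1}\cup\delta_n$, hence equals the staircase $\lambda_n$ plus a single square of $\delta_{n+1}$, which forces one domino per row and per column beyond the core; one must also handle the fact that, unlike at rank $n-1$, the positive and negative insertions can interact near the corner at rank $n-2$ --- exactly the bookkeeping your sketch sets aside.
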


\subsection{Unions of asymptotic cells}
We will now show that the  Kazhdan-Lusztig  and combinatorial left intermediate cells coincide.  The proof is different for split and non-split elements of $W_n$.  We summarize the salient results on Kazhdan-Lusztig cells in the following theorem, which is an amalgam of
Corollaries~5.9 and 8.11; Lemmas~5.10(iii), 8.3, and 8.7; and Theorem~8.8 of \cite{howse:vogan}:

\begin{theorem}[\cite{howse:vogan}]\label{theorem:howse} The sets of split and non-split elements of $W_n$ are both unions of left intermediate cells. Further:
    \begin{enumerate}
        \item[(a)] If $w$ is split, then its left intermediate cell is also a left asymptotic cell.
        \item[(b)] If $w$ is not split, then its left intermediate cell is the union of two left asymptotic cells and coincides with the set of all non-split elements with the same $\tau$-invariant as $w$.
    \end{enumerate}
\end{theorem}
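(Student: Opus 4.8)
This result is imported directly from \cite{howse:vogan}, so the plan is not to reprove it but to indicate how the cited ingredients assemble and where the genuine work lies. The key input is the characterization, obtained in \cite{howse:vogan}, of Kazhdan--Lusztig left $(n-2)$-cells by a generalized $\tau$-invariant in the sense of \cite{bonnafe:geck}: two elements of $W_n$ lie in the same left intermediate cell exactly when they are joined by a chain of elementary moves built from the enhanced invariant $\rxi$. Everything downstream of this is bookkeeping on signed permutations and domino tableaux, of the flavour already visible in Examples~\ref{example:nonarea} and~\ref{example:area}.

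Granting this characterization, the first task is to show that the split and the non-split elements are each unions of left intermediate cells, and I would do this by checking that each elementary move preserves membership in $W_n^{n-1}$. Via the description $W_n^{n-1}=\area_n$ this reduces to a direct inspection of the action of the moves on one-line notation, using the length criteria $\ell(ws_i)<\ell(w)\iff w(i+1)<w(i)$ and $\ell(wt_j)<\ell(w)\iff w(j)<0$ to read off $\tau$ and $\rxi$ from the sequence $w(1)\ldots w(n)$.

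For part (a): if $w$ is split then $s(w)\le n-2$, so $G_{n-2}(w)$ is already a split pair, and Proposition~\ref{proposition:split} shows that the combinatorial left intermediate cell $\mathcal{C}_{n-2}(w)$ is a left asymptotic cell. The parallel Hecke-side fact --- that the generalized $\tau$-invariant class of a split element already coincides with its asymptotic class --- is supplied by the cited corollaries of \cite{howse:vogan}, and combined with the asymptotic identity $\mathcal{C}_a=\mathcal{K}_a$ of \cite{bonnafe:iancu} this gives $\mathcal{K}_{n-2}(w)=\mathcal{C}_{n-2}(w)$, an asymptotic cell. For part (b): if $w$ is non-split then $Q_{n-2}(w)$ carries exactly one non-core open cycle, so Proposition~\ref{proposition:MTDelta} in this single-cycle case presents $\mathcal{C}_{n-2}(w)$ as a disjoint union of two right-tableau classes of rank $n-1$, each a left asymptotic cell (as in Example~\ref{example:area}); since moving through an open cycle leaves the $\tau$-invariant unchanged (Corollary~\ref{proposition:tau} with Lemma~\ref{lemma:tau:cycle}), the two summands share the $\tau$-invariant of $w$. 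The remaining point --- that this union exhausts the non-split elements with that $\tau$-invariant, matching the description of $\mathcal{K}_{n-2}(w)$ --- is the content of the cited lemmas and theorem of \cite{howse:vogan}, which compute the generalized $\tau$-class of a non-split element and identify it with an ordinary-$\tau$ fibre inside $\area_n$.

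The main obstacle is the key input itself: the generalized $\tau$-invariant characterization of intermediate left cells is the deep representation-theoretic ingredient of \cite{howse:vogan} and is not reachable from the domino-tableau combinatorics recalled in this paper. Once it is in hand, the remaining pieces --- stability of the cell in the split case, the $2$-to-$1$ collapse in the non-split case, and the reduction of a non-split cell to an ordinary-$\tau$ fibre --- are finite combinatorial verifications about the structure of $\area_n$ and its descent sets.
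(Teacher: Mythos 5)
The paper offers no proof of this statement: it is imported wholesale from \cite{howse:vogan} as an amalgam of Corollaries 5.9 and 8.11, Lemmas 5.10(iii), 8.3 and 8.7, and Theorem 8.8 there, and your proposal likewise defers all of the genuine content (the generalized $\tau$-invariant characterization of intermediate left cells and the Kazhdan--Lusztig statements (a) and (b) themselves) to that reference, so your approach is essentially the paper's. One small caution: the combinatorial arguments you interleave (Propositions \ref{proposition:split} and \ref{proposition:MTDelta}, Corollary \ref{proposition:tau}) establish facts about the \emph{combinatorial} cells $\mathcal{C}_{n-2}$ and are the content of the present paper's Section \ref{section:intermediate}, not ingredients in the proof of this theorem, which lives entirely on the Hecke-algebra side in \cite{howse:vogan}.
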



%
%
%
%
%

We can show that the combinatorial left intermediate cells for split $w$ are simply left asymptotic cells.  In fact, a little more can be said.  The following is inspired by Example \ref{example:nonarea}:

\begin{proposition}\label{proposition:Wk} Suppose that $w \in W_n^k$. Then for all $r \geq k$, $\mathcal{C}_r(w)= \mathcal{C}_a(w)$.  In particular, if $w$ is split, then $\mathcal{C}_{n-2}(w) =  \mathcal{C}_a(w).$
\end{proposition}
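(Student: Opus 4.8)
The plan is to reduce the statement to an iterated application of Proposition~\ref{proposition:MTDelta}, using the hypothesis $w \in W_n^k$ to control the non-core open cycles of the recording tableaux at each rank. Fix $r \geq k$ and set $T = Q_r(w)$. Since $w \in W_n^k$ means $G_k(w)$ is split, and since by Theorem~\ref{proposition:MMT} the higher-rank tableau pairs are obtained from $G_k(w)$ by the rank-increasing maps $\Gamma$, one should first argue that $Q_r(w)$ is itself split for every $r \geq k$. This follows from the observation in Section~\ref{subsection:partition}: if $T$ is split then $\MT(T,\corecycles(T))$ is again split, and moving through core cycles is exactly what $\Gamma$ does on a split pair (a split tableau has no non-core open cycles, so the extended cycle set consists only of core cycles). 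Hence the whole sequence $Q_k(w), Q_{k+1}(w), \ldots$ consists of split tableaux.

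Next I would invoke Proposition~\ref{proposition:split}: since $T = Q_r(w)$ is split, $\mathcal{C}(T)$ is a left asymptotic cell. On the other hand, because a split tableau has $\noncorecycles(T) = \emptyset$, the formula $\mathcal{C}_r(w) = \bigsqcup_{U \subset \noncorecycles(T)} \mathcal{C}(T_U)$ from Section~3.2 collapses to $\mathcal{C}_r(w) = \mathcal{C}(T)$. Therefore $\mathcal{C}_r(w) = \mathcal{C}(T)$ is a left asymptotic cell for every $r \geq k$. It remains to check that this asymptotic cell is independent of $r$ and equals $\mathcal{C}_a(w)$. For the independence, apply Proposition~\ref{proposition:MTDelta} with $\noncorecycles(T) = \emptyset$: the indexing sets are singletons, so the proposition reads $\mathcal{C}(T) = \mathcal{C}(T')$ where $T' = \MT(T,\corecycles(T)) = Q_{r+1}(w)$ (the last equality because $\Gamma_r$ acts on a split pair by moving through core cycles). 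Iterating from $r = k$ up to $r = n-1$ shows $\mathcal{C}(Q_r(w)) = \mathcal{C}(Q_{n-1}(w))$; since $G_{n-1}(w)$ is always split and rank $n-1$ is asymptotic, $\mathcal{C}(Q_{n-1}(w)) = \mathcal{C}_{n-1}(w) = \mathcal{C}_a(w)$. Combining, $\mathcal{C}_r(w) = \mathcal{C}_a(w)$ for all $r \geq k$. The final sentence is the special case $k \leq n-2$, i.e. $w$ split, applied at $r = n-2$.

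The main obstacle I anticipate is bookkeeping rather than conceptual: one must be careful that $\Gamma_r$ really does act on a split recording tableau by moving through precisely its core cycles — i.e.\ that property (b) in the definition of $\gamma_{ST}$ (same shape of $\MT(S,\gamma(S,T))$ and $\MT(T,\gamma(T,S))$) is already satisfied once all core cycles of a split pair are moved through, so that no extra non-core cycles are forced in. This is essentially the content of the ``$T'$ is again split'' remark in Section~\ref{subsection:partition}, together with the fact that moving through all core cycles raises the rank by one and preserves splitness, so the shapes automatically agree; I would spell this out by noting that for a split pair $(S,T)$ the shapes of $S$ and $T$ already coincide and moving through all core cycles of each is the canonical rank-raising operation, which depends only on the shape. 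Once that is pinned down, the rest is a clean induction on $r$ using Proposition~\ref{proposition:MTDelta}.
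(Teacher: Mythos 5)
Your argument is correct and follows essentially the same route as the paper: splitness of $G_r(w)$ for $r \geq k$ gives $\noncorecycles(Q_r(w)) = \varnothing$, so $\mathcal{C}_r(w) = \mathcal{C}(Q_r(w))$, which is a left asymptotic cell by Proposition~\ref{proposition:split}, hence equals $\mathcal{C}_a(w)$. The iteration via Proposition~\ref{proposition:MTDelta} and the preservation of splitness that you spell out is precisely the content of Proposition~\ref{proposition:split} and the discussion preceding it, which the paper's proof simply cites.
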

\begin{proof}
If $r \geq k$, the tableaux pair $(S,T) : = G_r(w)$ is split.  Since split tableaux have no non-core cycles, $\mathcal{C}(T)$ is a combinatorial left $r$-cell and equals $\mathcal{C}_r(w)$.  By Proposition \ref{proposition:split}, $\mathcal{C}(T)$ is a left asymptotic cell and so it equals $\mathcal{C}_a(w)$.
\end{proof}

\begin{lemma}\label{lemma:notarea}
If $w$ is split, then $\mathcal{C}_{n-2}(w) = \mathcal{K}_{n-2}(w)$.
\end{lemma}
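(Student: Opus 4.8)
The plan is to combine Proposition~\ref{proposition:Wk} with part (a) of Theorem~\ref{theorem:howse}. Suppose $w$ is split. By Proposition~\ref{proposition:Wk} we have $\mathcal{C}_{n-2}(w) = \mathcal{C}_a(w)$, so the combinatorial left intermediate cell of $w$ coincides with its combinatorial left asymptotic cell. On the Kazhdan-Lusztig side, Theorem~\ref{theorem:howse}(a) tells us that the left intermediate cell $\mathcal{K}_{n-2}(w)$ of a split element is a left asymptotic cell $\mathcal{K}_a(w)$. So the problem reduces to showing that $\mathcal{C}_a(w) = \mathcal{K}_a(w)$, and this is precisely the content of the $r \geq n-1$ case of Conjecture~\ref{conjecture:bgil}, which was established by Bonnaf\'e and Iancu in \cite{bonnafe:iancu} (as already noted in the excerpt just after the statement of the conjecture).

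The one point that requires a little care is bookkeeping about which ``asymptotic cell'' is meant on each side and making sure the identifications line up. On the combinatorial side, $\mathcal{C}_a(w)$ is by definition the stable value of $\mathcal{C}_r(w)$ for $r \geq n-1$; on the Kazhdan-Lusztig side, the cells $\mathcal{K}_r(w)$ also stabilize for $r \geq n-1$ (the asymptotic regime of Bonnaf\'e-Iancu), and we call the common value $\mathcal{K}_a(w)$. Thus I would write: $\mathcal{C}_{n-2}(w) = \mathcal{C}_a(w) = \mathcal{K}_a(w) = \mathcal{K}_{n-2}(w)$, where the first equality is Proposition~\ref{proposition:Wk}, the second is the asymptotic case of Conjecture~\ref{conjecture:bgil} from \cite{bonnafe:iancu}, and the third is Theorem~\ref{theorem:howse}(a).

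There is essentially no obstacle here; the lemma is a short assembly of results already available. The only thing to double-check is that Theorem~\ref{theorem:howse}(a) indeed identifies $\mathcal{K}_{n-2}(w)$ with the \emph{same} asymptotic cell that contains $w$ (not merely with \emph{some} asymptotic cell) — but since $w$ itself belongs to $\mathcal{K}_{n-2}(w)$ and to $\mathcal{K}_a(w)$, and the former is asserted to be a single left asymptotic cell, it must be $\mathcal{K}_a(w)$. So the proof is a one-line chain of equalities once this is observed.
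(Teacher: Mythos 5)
Your proposal is correct and follows essentially the same route as the paper: Proposition~\ref{proposition:Wk} gives $\mathcal{C}_{n-2}(w) = \mathcal{C}_a(w)$, and Theorem~\ref{theorem:howse}(a) identifies $\mathcal{K}_{n-2}(w)$ with the asymptotic cell containing $w$, which by the Bonnaf\'e--Iancu result equals $\mathcal{C}_a(w)$. The only difference is that you make explicit the identification $\mathcal{C}_a(w)=\mathcal{K}_a(w)$ and the uniqueness-of-the-asymptotic-cell bookkeeping, which the paper's one-sentence proof leaves implicit.
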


\begin{proof} This is a direct consequence of Proposition \ref{proposition:Wk} which gives us the equality $\mathcal{C}_{n-2}(w) = \mathcal{C}_a(w)$, and Theorem~\ref{theorem:howse}(a)
gives us $\mathcal{K}_{n-2}(w) = \mathcal{C}_a(w)$.
\end{proof}

Immediately, we obtain the following analogue of the first statement of Theorem~\ref{theorem:howse} for combinatorial cells. It also admits a straightforward independent proof.

\begin{proposition}\label{proposition:unions}
The sets of split and non-split elements of $W_n$ are both unions of combinatorial left intermediate cells.
\end{proposition}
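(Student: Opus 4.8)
The statement to prove is Proposition \ref{proposition:unions}: that the set of split elements of $W_n$ and the set of non-split elements of $W_n$ are each a union of combinatorial left intermediate cells — equivalently, that membership in $W_n^{n-1} = \area_n$ (versus its complement) is constant on each combinatorial left $(n-2)$-cell. The plan is to reduce this to a statement purely about recording tableaux: since $\mathcal{C}_{n-2}(w)$ is built from $w' \mapsto Q_{n-2}(w')$ and the relation $\sim_{n-2}$, it suffices to show that for $T = Q_{n-2}(w)$ and any $U \subset \noncorecycles(T)$, the element $w$ is split iff every $w'$ with $Q_{n-2}(w') = \MT(T,U)$ is split. The cleanest route is to characterize "$w$ split" intrinsically in terms of $T = Q_{n-2}(w)$ and then observe that the characterization is invariant under moving through non-core cycles.

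The first step is to identify the tableau-theoretic meaning of being split at rank $n-2$. By Proposition \ref{proposition:Wk}, $w$ is split iff $G_{n-2}(w)$ is split, which happens iff the diagonal $\delta_n$ of $Q_{n-2}(w)$ contains an empty square (Definition \ref{definition:split}). So I would first show: \emph{$T \in \SDT_{n-2}(n)$ is split if and only if, for every set $U \subset \noncorecycles(T)$ of non-core cycles, $\MT(T,U)$ is split.} One direction is immediate from Proposition \ref{proposition:split}, which tells us a split tableau has no non-core cycles at all, so the only choice is $U = \emptyset$. For the converse, suppose $T$ is not split; then $\delta_n$ is completely filled, and I need a non-core cycle $U$ with $\MT(T,U)$ also not split. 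But in fact moving through a non-core cycle never changes the set of squares in the tableau (by definition of non-core), so it cannot create an empty square on $\delta_n$; hence $\MT(T,U)$ is not split for \emph{any} $U \subset \noncorecycles(T)$. This makes the equivalence essentially trivial once unwound: splitness of $T$ depends only on the shape $\shape(T)$, and all $\MT(T,U)$ for $U$ a non-core cycle have the same shape, and moreover (more delicately) none of them is split unless $T$ itself has no non-core cycles.

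Given this, the second step assembles the proposition. Take $w$ and any $y$ with $w \sim^L_{n-2} y$, i.e.\ $Q_{n-2}(w) \sim_{n-2} Q_{n-2}(y)$, so $Q_{n-2}(w) = \MT(Q_{n-2}(y), U)$ for some $U \subset \noncorecycles(Q_{n-2}(y))$. By the first step, $Q_{n-2}(w)$ is split iff $Q_{n-2}(y)$ is split, hence $w$ is split iff $y$ is split. Since the combinatorial left $(n-2)$-cell of $w$ is exactly the $\sim^L_{n-2}$-equivalence class of $w$, splitness is constant on it, which is precisely the claim that $\area_n$ and its complement are unions of combinatorial left intermediate cells.

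The main obstacle, such as it is, is bookkeeping around the definition of "non-core": I must be careful that "moving through a non-core open cycle" is exactly the operation whose defining property is that it preserves the total number of squares in the Young diagram (and indeed the shape, since a non-core open cycle that changed the shape would be a non-core \emph{and} shape-changing cycle — the definitions in Section \ref{section:cycles} need to be read precisely here: an open cycle changes the shape, a core cycle changes the number of squares, so a non-core open cycle changes the shape but not the square count). I would double-check whether the relevant invariant is the square count on $\delta_n$ specifically — and if moving through a non-core cycle can shift squares between diagonals — or whether it is cleaner to phrase splitness via the total shape as Proposition \ref{proposition:split} effectively does. Either way, the essential point is that a split $T$ simply has no non-core cycles to move through, so the $\sim_{n-2}$-class of a split tableau is a singleton up to the left tableau, and the proposition follows. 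An alternative, even shorter, independent proof just cites Proposition \ref{proposition:Wk}: every split $w$ has $\mathcal{C}_{n-2}(w) = \mathcal{C}_a(w)$, and every element of an asymptotic cell is split (being $G_r(\cdot)$-split for all large $r$, hence for $r = n-2$ by Proposition \ref{proposition:split} applied along the chain), so split cells contain only split elements and the complement is automatically a union of cells too.
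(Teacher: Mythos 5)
Your argument is correct in substance, but it is not the paper's proof: in the paper, Proposition \ref{proposition:unions} is recorded as an immediate consequence of Lemma \ref{lemma:notarea} together with Theorem \ref{theorem:howse} --- for split $w$ one has $\mathcal{C}_{n-2}(w)=\mathcal{K}_{n-2}(w)$, and since the split set is a union of Kazhdan--Lusztig left intermediate cells, every combinatorial left intermediate cell meeting the split set is contained in it, with the non-split set handled by complementation. What you give instead is essentially the ``straightforward independent proof'' the paper alludes to but does not write out: splitness of $w$ is equivalent to splitness of $Q_{n-2}(w)$ (a shape condition that propagates up the ranks, as in the proof of Proposition \ref{proposition:Wk} and the discussion in Section \ref{subsection:partition}), a split tableau has no non-core open cycles, hence its $\sim_{n-2}$-class is a singleton and the cell of a split element is a single $\mathcal{C}(T)$ with $T$ split, so it contains only split elements. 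Your route avoids the Hecke-algebra input from \cite{howse:vogan}, at the small cost of needing the symmetry of $\sim_{n-2}$ (moving through is an involution and non-core cycles of $T$ correspond to non-core cycles of $\MT(T,U)$), which the paper's one-line derivation does not require.

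Two local points need repair. First, your justification in the converse direction --- ``moving through a non-core cycle never changes the set of squares'' --- contradicts the definitions in Section \ref{section:cycles}: a non-core \emph{open} cycle does change the shape, and only the total number of squares is preserved, so you cannot conclude directly that no square of $\delta_n$ is vacated. You flag this yourself, and the correct fix is exactly your closing observation: if $Q_{n-2}(w)=\MT(Q_{n-2}(y),U)$ with $Q_{n-2}(w)$ split, then by symmetry $Q_{n-2}(y)=\MT(Q_{n-2}(w),U')$ with $U'\subset\noncorecycles(Q_{n-2}(w))=\varnothing$, so the two recording tableaux coincide; no diagonal bookkeeping is needed. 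Second, the ``even shorter'' alternative is not sound as written: it is false that every element of an asymptotic cell is split (non-split elements also lie in asymptotic cells --- indeed Theorem \ref{theorem:howse}(b) describes their intermediate cells as unions of two asymptotic cells), and splitness does not propagate downward in rank, so ``$G_r$-split for all large $r$, hence for $r=n-2$'' runs the implication the wrong way. Proposition \ref{proposition:Wk} alone does not show that $\mathcal{C}_a(w)$ consists of split elements; one still needs the singleton-class argument above or the paper's appeal to Theorem \ref{theorem:howse}.
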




The proof of  Lemma \ref{lemma:notarea} is straightforward partially because the extended cycles in $G_r(w)$ for $k$-split $w$ take on a particularly simple form whenever $r \geq k$. In general, if $r < k$ this cycle structure can be quite complex, see Section 2 of \cite{pietraho:equivalence}.  There is one instance when it is still tractable, mainly when $k = r+1$.  Analyzing this case will let us show that on Kazhdan-Lusztig and combinatorial left cells agree for non-split elements.  The proof proceeds via a sequence of propositions. The first details the cycle structure of the tableau pair $G_r(w)$ for $(r+1)$-split signed permutations and is modeled after Example \ref{example:area}.


\begin{proposition} \label{proposition:cycles2}
Suppose $w \in W_n^{r+1}$, $T:=Q_r(w)$, and $T' := Q_{r+1}(w)$.  Then:
\begin{enumerate}
    \item[(a)] There is a unique non-core open cycle $c$ in $T$.  With this exception, every other cycle in  $T$ is a core open cycle and consists of the labels of all horizontal dominos in a row or all vertical dominos in a column of $T$.
    \item[(b)]  With cycle $c$ as above, define tableaux $\overline{T} = \MT(T, c)$ and $\overline{T'} = \MTop(T', c)$.  Then  $\mathcal{C}(T) \cup \mathcal{C}(\overline{T})$ is a combinatorial left $r$-cell, and $$\mathcal{C}(T) \cup \mathcal{C}(\overline{T}) = \mathcal{C}(T') \cup \mathcal{C}(\overline{T'}).$$  When $c$ is a cycle consisting of the label of a single domino, then  $\mathcal{C}(T')$ as well as  $\mathcal{C}(\overline{T'})$ are left asymptotic cells and the above equation is a decomposition of a combinatorial left $r$-cell into two left asymptotic cells.
\end{enumerate}
\end{proposition}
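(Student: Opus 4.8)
The plan is to deduce everything from Theorem~\ref{proposition:MMT} (so that $G_{r+1}(w) = \Gamma_r(G_r(w))$), the defining property of $W_n^{r+1}$ — that $Q_{r+1}(w)$ is split while $Q_r(w)$ is not — Proposition~\ref{proposition:MTDelta}, and one small observation about moving through a single domino; the model to keep in mind is Example~\ref{example:area}. For part (a) I would first rewrite $T' = Q_{r+1}(w)$ in a form adapted to Proposition~\ref{proposition:MTDelta}. Put $S := P_r(w)$ and $T^\circ := \MT(T, \corecycles(T)) \in \SDT_{r+1}(n)$. Unwinding the definition of $\Gamma_r$, the right tableau of $\Gamma_r(S,T)$ is $\MT(T, \gamma(T,S))$ with $\gamma(T,S) = \corecycles(T) \cup U_0$ for some $U_0 \subseteq \noncorecycles(T)$; and, exactly as in the opening paragraph of the proof of Proposition~\ref{proposition:MTDelta}, this equals $\MTop(T^\circ, U_0)$ once the cycles in $U_0$ are reinterpreted as opposite cycles of $T^\circ$ via the canonical identification $\noncorecycles(T) = \oppnoncorecycles(T^\circ)$. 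Thus $T' = \MTop(T^\circ, U_0)$, equivalently $T^\circ = \MTop(T', U_0)$. Since $T'$ is split, by Section~\ref{subsection:partition} it is a disjoint union of full rows of horizontal dominoes and full columns of vertical dominoes whose labels are precisely its cycles, all core open cycles, with no closed cycles. The heart of (a) is to push this rigidity back to $T$ through the moving-through dictionary: one has to show that a split tableau admits at most one non-core \emph{opposite} open cycle (and that our $T'$, arising from the non-split $T$, admits exactly one — in Example~\ref{example:area} it is $\{4\}$), and that this cycle corresponds, via $T^\circ$ and the identifications above, to a unique non-core open cycle $c$ of $T$, the remaining cycles of $T$ then matching the row/column cycles of $T'$ and hence being core open cycles consisting of the labels of a full row of horizontal, or a full column of vertical, dominoes. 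This structural claim about split tableaux, together with the fact that a genuinely non-split $Q_r(w)$ forces $\noncorecycles(T) \neq \emptyset$, is where the real work lies; I expect it to be the main obstacle, since when $r$ is more than one step below the split index the cycle structure of $Q_r(w)$ is much more intricate.

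Granting (a), part (b) is bookkeeping. From $\mathcal{C}_r(w) = \bigsqcup_{U \subseteq \noncorecycles(T)} \mathcal{C}(\MT(T,U))$ and $\noncorecycles(T) = \{c\}$ one gets immediately $\mathcal{C}_r(w) = \mathcal{C}(T) \sqcup \mathcal{C}(\overline{T})$ with $\overline{T} = \MT(T,c)$, which is the asserted combinatorial left $r$-cell. Feeding $T$ into Proposition~\ref{proposition:MTDelta} (with the same $T^\circ$ and with $c$ regarded as an opposite cycle of $T^\circ$) and using $\noncorecycles(T) = \{c\}$, the proposition collapses to $\mathcal{C}(T) \sqcup \mathcal{C}(\overline{T}) = \mathcal{C}(T^\circ) \sqcup \mathcal{C}(\MTop(T^\circ,c))$. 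Since $T' = \MTop(T^\circ, U_0)$ with $U_0 \in \{\emptyset, \{c\}\}$ and $\MTop(\cdot,c)$ is an involution, a short case check gives $\{T', \overline{T'}\} = \{T^\circ, \MTop(T^\circ,c)\}$ (in particular $c$ is a well-defined opposite cycle of $T'$, so $\overline{T'} = \MTop(T',c)$ makes sense); hence $\mathcal{C}(T) \cup \mathcal{C}(\overline{T}) = \mathcal{C}(T') \cup \mathcal{C}(\overline{T'})$, and the two sets are disjoint because $c$ is open, so $T' \neq \overline{T'}$.

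For the final assertion, when $c$ is the label of a single domino I would isolate the observation that moving through a single-domino open (or opposite open) cycle preserves, for every $k$, the number of squares of the underlying Young diagram lying on the diagonal $\delta_k$: the move only flips that domino between its horizontal and vertical shapes, deleting one square and adding one, and a short check shows the deleted and added squares always lie on the same diagonal. Consequently $\overline{T'} = \MTop(T',c)$ is split whenever $T'$ is and has the same rank $r+1$, so by Proposition~\ref{proposition:split} both $\mathcal{C}(T')$ and $\mathcal{C}(\overline{T'})$ are left asymptotic cells; the displayed equality then realizes the combinatorial left $r$-cell $\mathcal{C}(T) \cup \mathcal{C}(\overline{T})$ as a disjoint union of two left asymptotic cells, as claimed.
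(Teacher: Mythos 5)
Your part (b) follows the paper's route almost verbatim: the first claim is immediate from $\noncorecycles(T)=\{c\}$, the displayed equality is Proposition~\ref{proposition:MTDelta} read with the caveat that $\MT(T,\corecycles(T))$ may be either $T'$ or $\overline{T'}$, and your single-domino observation (the vacated and the newly occupied square lie on the same diagonal, so splitness of $T'$ passes to $\overline{T'}$, after which Proposition~\ref{proposition:split} makes both $\mathcal{C}(T')$ and $\mathcal{C}(\overline{T'})$ asymptotic cells) is exactly the paper's explicit computation with $D(m,T')=\{s_{ij},s_{i+1,j}\}$. So that half of the argument is fine.

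The genuine gap is in part (a). You correctly reduce it to a structural claim about the split rank-$(r+1)$ tableau --- that all of its opposite cycles are open, with exactly one opposite core open cycle attached to each square of $\delta_{r+1}$, hence exactly one opposite non-core open cycle, which under the dictionary ``cycles of $T$ $=$ opposite cycles of the rank-$(r+1)$ tableau'' produces the unique non-core open cycle $c$ of $T$ and the row/column description of its remaining cycles --- but you then label this claim ``the main obstacle'' and leave it unproved. That claim \emph{is} the content of (a): the paper's proof consists precisely of establishing this count for the split tableau $T'$ (there are exactly $r+2$ opposite cycles, all open, one core opposite open cycle for each square in $\delta_{r+1}(T')$, leaving a single non-core one), so what you have is a reduction rather than a proof. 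Moreover, your stated reason for expecting difficulty --- that the cycle structure of $Q_r(w)$ is intricate when $r$ lies well below the split index --- does not apply here: the hypothesis $w\in W_n^{r+1}$ puts you exactly one step below the split index, which is the tractable case, and the analysis is carried out on the split tableau $T'$, whose decomposition into rows of horizontal and columns of vertical dominos makes the opposite fixed squares, and hence the opposite cycle partition, straightforward to enumerate; carrying out that enumeration is what your proposal is missing. (A smaller point, which both you and the paper pass over silently: your detour through $T^\circ=\MT(T,\corecycles(T))$ rather than $T'=Q_{r+1}(w)$ itself requires knowing that applying $\MTop(\cdot,\{c\})$ does not alter the opposite-cycle label partition; if you keep that route, say a word about it.)
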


\begin{proof}

As sets of labels, cycles in  $T$ coincide with  opposite cycles in $T'$, a split tableau.  The non-core cycles of $T$ are the non-core opposite cycles in $T'$.  There are exactly $r+2$ opposite cycles in $T'$, all open, and exactly one core opposite open cycle for each square in $\delta_{r+1}(T')$.  This leaves one opposite non-core open cycle $c$ in $T'$, proving part (a).

The first part of (b) is clear from the definition of combinatorial left cells since $c$ is the unique non-core open cycle in $T$.  The second part is just a restatement of Proposition \ref{proposition:MTDelta} adapted to the present setting, keeping in mind that here $\MT(T,\corecycles(T))$ may be either $T'$ or $\overline{T'}$.
For the last part, we have to show that when $c$ consists of the label of a single domino, then $\noncorecycles(T') = \noncorecycles(\overline{T'}) = \varnothing$.  For the tableau $T'$ this is straightforward: $T'$ is split and Proposition \ref{proposition:Wk} applies.  This is generally not true for $\overline{T'}$ unless $c$ is a singleton.  So assume that $c=\{m\}$ and that $D(m,T') =\{s_{ij},s_{i+1,j}\}$ is vertical.  Then $s_{i,j+1}$ is an empty square in $\delta_{r+3}(T')$ and since $c$ is a non-core cycle, $D(m, \overline{T'}) = \{s_{ij},s_{i,j+1}\}$ and $s_{i+1,j}$ is an empty square in $\delta_{r+3}(\overline{T'})$.
In particular, this means $\overline{T'}$ is split and $\noncorecycles(\overline{T'}) = \varnothing$.  If on the other hand we assume that $D(m,T')$ is horizontal, a similar argument applies. This gives us the desired decomposition of a combinatorial left $r$-cell in terms of two left asymptotic cells.
\end{proof}

\begin{proposition}\label{proposition:n-1}
Suppose $w$ and  $y$  are not split and let $S := Q_{n-1}(w)$, and $T :=Q_{n-1}(y)$.
The set $c= \{n\}$ is an opposite non-core open cycle in both $S$ and $T$.  If $\tau(S) = \tau(T)$, then either
 $T = S$ or  $T = \MTop(S, c).$
\end{proposition}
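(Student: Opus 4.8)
The plan is to first pin down the cycle $c$, and then to recover enough of $w$ from its $\tau$-invariant. Since $w$ is non-split we have $w\in W_n^{n-1}$, so Proposition~\ref{proposition:cycles2}, applied with $r=n-2$ (its tableau $T'$ is then precisely $S=Q_{n-1}(w)$), shows that $S$ carries a \emph{unique} non-core opposite open cycle, and likewise for $T=Q_{n-1}(y)$. It therefore suffices to exhibit $\{n\}$ as one such cycle in $S$. For this I would use the description of $G_{n-1}$ on $\area_n$ recalled before Definition~\ref{definition:split}: since $w\in\area_n$, the positive entries of $w$ are inserted in decreasing order and the negative ones in decreasing absolute value, so the insertion path of each letter — and hence the entire tiling of $S$ — is determined by the sign pattern of $w$ alone, and the last domino $D(n,S)$ occurs at an outer corner of $S$, as the last of the horizontal dominos if $w(n)>0$ and as the last of the vertical dominos if $w(n)<0$. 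Because $w(n)$ is extreme in its sign class, rotating $D(n,S)$ about its fixed square for the opposite convention yields another standard domino tableau without displacing any other domino; this rotation changes the shape of $S$ but not its number of squares, so $\{n\}$ is a non-core opposite open cycle, and by the uniqueness above $c=\{n\}$.

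For the second assertion I would pass to signed permutations. By Corollary~\ref{proposition:tau}, $\tau(S)=\tau(w)$ and $\tau(T)=\tau(y)$, so the hypothesis reads $\tau(w)=\tau(y)$. Let $\mathbf I(w)\subseteq\{1,\dots,n\}$ be the set of positions at which $w$ is positive. A short inspection of the four possibilities for the signs of $w(i)$ and $w(i+1)$, using that along $\area_n$ the positive values decrease and the negative values decrease in absolute value, shows that for $1\le i\le n-1$ one has $s_i\in\tau(w)$ if and only if $i\in\mathbf I(w)$. Thus $\tau(w)=\tau(y)$ forces $\mathbf I(w)$ and $\mathbf I(y)$ to agree on $\{1,\dots,n-1\}$. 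Moreover, by the same determinacy of insertion paths, for any $z\in\area_n$ the tableau $Q_{n-1}(z)$ depends only on $\mathbf I(z)$: each insertion step merely prolongs the horizontal or the vertical configuration by one more domino, so only the sequence of signs affects the recorded shapes.

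To finish: if $n\in\mathbf I(w)$ exactly when $n\in\mathbf I(y)$, then $\mathbf I(w)=\mathbf I(y)$ and $T=Q_{n-1}(y)=Q_{n-1}(w)=S$. Otherwise $\mathbf I(w)$ and $\mathbf I(y)$ agree off $n$ and differ in whether they contain $n$; then $S$ and $T$ have identical dominos labelled $1,\dots,n-1$, namely the common tableau $Q_{n-1}^{\,n-1}(w)=Q_{n-1}^{\,n-1}(y)$, while their dominos labelled $n$ have opposite orientations. By the first paragraph the two legal placements of the final domino onto this common tableau are exactly the two related by moving through $\{n\}$ as an opposite cycle, so $T=\MTop(S,c)$.

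The main obstacle is the first paragraph: one must locate $D(n,S)$ precisely within the split tableau $S$ and verify, in the two cases according to the sign of $w(n)$ and depending on this domino's position relative to the staircase core, that $\MTop(S,\{n\})$ swings it into the one remaining admissible slot while leaving the square count unchanged. This is the only point where the $\area_n$ hypothesis is genuinely needed, and it simultaneously supplies the identification of $\MTop(S,c)$ with the horizontal/vertical exchange of the last domino that is used in the final step.
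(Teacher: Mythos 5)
Your proposal is correct, but it takes a genuinely different route from the paper's proof. The paper identifies $c=\{n\}$ by a counting argument: in the split rank-$(n-1)$ tableau every label is a singleton opposite cycle, exactly $n-1$ of these are core, and standardness forces the non-core one to carry the largest label $n$; it then gets the dichotomy structurally, combining Lemma~\ref{lemma:tau:cycle} (so $\tau(S)=\tau(\MTop(S,c))$) and Proposition~\ref{proposition:cycles2}(b) (both $\mathcal{C}(S)$ and $\mathcal{C}(\MTop(S,c))$ are asymptotic left cells consisting of non-split elements) with the external input Theorem~\ref{theorem:howse}(b), which says there are exactly two asymptotic left cells inside $W_n^{n-1}$ whose elements share a given $\tau$-invariant; since $\mathcal{C}(T)$ is a third such cell with invariant $\tau(S)$, it must coincide with one of the two, giving $T=S$ or $T=\MTop(S,c)$. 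You instead argue purely combinatorially from asymptotic insertion restricted to $\area_n$: the $\tau$-invariant pins down the sign pattern in positions $1,\dots,n-1$, the recording tableau of an element of $\area_n$ depends only on its sign pattern (the $k$-th positive letter records a horizontal domino in row $k$, the $k$-th negative letter a vertical domino in column $k$), and the two possible placements of the $n$-th domino share their fixed square on $\delta_n$ and are exchanged by $\MTop(\cdot,\{n\})$. Your route buys independence from the Kazhdan--Lusztig input (Theorem~\ref{theorem:howse} is not needed, and in effect you reprove a tableau-level analogue of its part (b)), at the cost of the two insertion facts you flag yourself, which are routine but must be written out carefully. Two small points of bookkeeping: the uniqueness of the non-core opposite open cycle in $S$ is established in the \emph{proof} of Proposition~\ref{proposition:cycles2}, whose statement (a) concerns regular cycles of $Q_{n-2}(w)$, so you should cite the identification of cycles of $Q_{n-2}(w)$ with opposite cycles of $Q_{n-1}(w)$ rather than the statement itself --- though in fact your explicit description of $S$, $T$ and the pivot makes both the uniqueness and that citation unnecessary; and the phrase ``the two legal placements of the final domino'' should be read as ``the two placements arising from the two possible signs of the last letter,'' since other standard extensions by a domino labelled $n$ do exist but never occur as recording dominoes here.
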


\begin{proof}
For the tableaux $S$ and $T$, the label of each of its $n$ dominos forms both a cycle and an opposite cycle.  Only $n-1$ of these are opposite core cycles, one for every square in on the diagonal $\delta_{n-1}$.  Because both tableaux are standard, the sole opposite non-core cycle $c$ must consist of the largest label which in this case is $n$.  Let $S' = \MTop(S,c)$.   By Lemma~\ref{lemma:tau:cycle}, $\tau(S) = \tau(S')$.  By Proposition~\ref{proposition:cycles2}(b), $\mathcal{C}(S)$ and $\mathcal{C}(S')$ are left asymptotic cells.  Further, it is easy to see that  $\mathcal{C}(S)$ and $\mathcal{C}(S')$ consist of non-split elements.  Similarly, we see that all elements of $\mathcal{C}(T)$ are not split and that it forms a left asymptotic cell.

By Theorem~\ref{theorem:howse}(b), there are exactly two left asymptotic cells within $W_n^{n-1}$ whose elements share the same $\tau$-invariant $\tau(S)$.   Since $\tau(S) = \tau(S') = \tau(T)$ and $S \neq S'$, the proposition follows.
\end{proof}

\begin{proposition}\label{proposition:n-2}
Suppose that $w$ and $y$ are not split.  If $\tau(w) = \tau(y)$, then $\mathcal{C}_{n-2}(w) =  \mathcal{C}_{n-2}(y)$.
\end{proposition}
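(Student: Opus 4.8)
The plan is to reduce the statement about the parameter $r=n-2$ to the asymptotic parameter $r=n-1$, where Proposition~\ref{proposition:n-1} already pins down the possible recording tableaux. First I would set $S := Q_{n-2}(w)$ and $T := Q_{n-2}(y)$, and invoke Corollary~\ref{proposition:tau} to translate the hypothesis $\tau(w)=\tau(y)$ into $\tau(S)=\tau(T)$. Since $w$ and $y$ are not split, they lie in $W_n^{n-1}$, so $n-2 = (r)+1$ with $r=n-2$ fits the hypothesis of Proposition~\ref{proposition:cycles2}: each of $S$ and $T$ has a unique non-core open cycle, and the combinatorial left $(n-2)$-cell of $w$ is $\mathcal{C}(S)\cup\mathcal{C}(\overline S)$ where $\overline S = \MT(S,c_S)$ for that cycle $c_S$, and similarly for $y$. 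So the goal becomes: $\mathcal{C}(S)\cup\mathcal{C}(\overline S) = \mathcal{C}(T)\cup\mathcal{C}(\overline T)$.

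Next I would pass to rank $n-1$ via $\Gamma_{n-2}$. Applying $\Gamma_{n-2}$ to the tableau pairs, Proposition~\ref{proposition:cycles2}(b) gives $\mathcal{C}(S)\cup\mathcal{C}(\overline S) = \mathcal{C}(S')\cup\mathcal{C}(\overline{S'})$ and $\mathcal{C}(T)\cup\mathcal{C}(\overline T) = \mathcal{C}(T')\cup\mathcal{C}(\overline{T'})$, where $S' := \MT(S,\corecycles(S))$ (one of the two rank-$(n-1)$ split tableaux attached to $S$) and $\overline{S'}$ its partner obtained by moving through the lifted non-core cycle, and similarly $T', \overline{T'}$. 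By Lemma~\ref{lemma:tau:cycle}, moving through core cycles and the non-core open cycle preserves the ordinary $\tau$-invariant, so $\tau(S') = \tau(S) = \tau(T) = \tau(T')$. Now $S'$ and $T'$ are rank-$(n-1)$ recording tableaux of non-split elements (one checks, as in the proof of Proposition~\ref{proposition:n-1}, that $\mathcal{C}(S')$ consists of non-split elements), so Proposition~\ref{proposition:n-1} applies: since $\tau(S')=\tau(T')$, either $T' = S'$ or $T' = \MTop(S',\{n\})$. Either way, $\{T',\overline{T'}\} = \{S',\overline{S'}\}$ as the two tableaux are interchanged by moving through the unique non-core opposite cycle $\{n\}$, which is exactly the relationship between $S'$ and $\overline{S'}$. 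Hence $\mathcal{C}(T')\cup\mathcal{C}(\overline{T'}) = \mathcal{C}(S')\cup\mathcal{C}(\overline{S'})$, and chaining the equalities yields $\mathcal{C}_{n-2}(w) = \mathcal{C}_{n-2}(y)$.

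The main obstacle I anticipate is the bookkeeping needed to identify $\overline{S'}$ (the second summand of the $(n-2)$-cell after lifting) with $\MTop(S',\{n\})$ — that is, confirming that the non-core open cycle $c_S$ of $S$ lifts under $\Gamma_{n-2}$ to precisely the cycle $\{n\}$ in the split rank-$(n-1)$ tableau, so that the two descriptions of "the two asymptotic cells sharing a given $\tau$-invariant" match up. This requires tracking how $\MT$ and $\MTop$ interact with the rank shift $\Gamma_{n-2}$, which Proposition~\ref{proposition:cycles2}(b) largely handles (it already states $\MT(T,\corecycles(T))$ may be either $T'$ or $\overline{T'}$ and that the non-core cycle becomes an opposite non-core cycle), but one must be careful that the labelings are standard so that the non-core opposite cycle is forced to be $\{n\}$, exactly as in the proof of Proposition~\ref{proposition:n-1}. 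Once that identification is in hand, the argument is just a short chain of set equalities.
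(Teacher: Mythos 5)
Your proposal is correct and follows essentially the same route as the paper: both arguments transfer the hypothesis to the rank-$(n-1)$ recording tableaux via Corollary~\ref{proposition:tau} (or Lemma~\ref{lemma:tau:cycle}) and then invoke Proposition~\ref{proposition:n-1} to conclude that those tableaux agree up to moving through the opposite non-core cycle $\{n\}$. The only cosmetic difference is the direction of the rank shift: the paper descends from rank $n-1$ to rank $n-2$ at the tableau level via $\Upsilon_{n-2}$, whereas you stay at the level of cells, using Proposition~\ref{proposition:cycles2}(b) to identify each combinatorial left intermediate cell with the union $\mathcal{C}(S')\cup\mathcal{C}(\overline{S'})$ of the two associated asymptotic cells and then matching these unions.
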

\begin{proof}
Let $(S,T)  := G_{n-2}(w)$,  $(\overline{S},\overline{T}) := G_{n-2}(y)$,
$(S',T') := G_{n-1}(w)$, and $(\overline{S'},\overline{T'}) := G_{n-1}(y)$.
We need to show that $T = \MT(\overline{T}, U)$ for some, perhaps empty, set $U$ of non-core open cycles in $\overline{T}$.
Recall that $G_{n-2} = \Upsilon_{n-2} \circ G_{n-1}$.  From the definition of  $\Upsilon_{n-2}$ we obtain $T= \MTop(T', \oppcorecycles(T') \cup V_1)$ and $\overline{T}=\MTop(\overline{T'},\oppcorecycles(\overline{T'}) \cup V_2)$ for some subsets $V_1 \subset \oppnoncorecycles(T')$ and $V_2 \subset \oppnoncorecycles(\overline{T'})$.  By Proposition~\ref{proposition:n-1} we know that $V_1$ and $V_2$ are not complicated as $\oppnoncorecycles(T') = \oppnoncorecycles(\overline{T'}) = \{n\}$.

Since $\tau(w) = \tau(y)$, Corollary~\ref{proposition:tau} implies that $\tau(T') = \tau(\overline{T'})$ and again by  Proposition \ref{proposition:n-1}, we know that either $T'=\overline{T'}$ or $T' = \MTop(\overline{T'},\{n\})$.  But this means that either $T = \overline{T}$ or $T = \MT(\overline{T},\{n\})$, as desired.
\end{proof}

\begin{lemma}\label{lemma:area}
If $w$ is not split, then $\mathcal{C}_{n-2}(w) = \mathcal{K}_{n-2}(w)$.
\end{lemma}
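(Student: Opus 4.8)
The strategy is to run the same dichotomy that Theorem~\ref{theorem:howse}(b) establishes on the Kazhdan--Lusztig side, but now on the combinatorial side, and then match the two. First I would invoke Proposition~\ref{proposition:unions} (the combinatorial analogue of the first assertion of Theorem~\ref{theorem:howse}), which tells us that $\mathcal{C}_{n-2}(w)$ is contained in the set of non-split elements. So both $\mathcal{C}_{n-2}(w)$ and $\mathcal{K}_{n-2}(w)$ are subsets of $W_n^{n-1}$, and it suffices to compare them there.

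\medskip

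The key input is Proposition~\ref{proposition:n-2}: if $w$ and $y$ are non-split and $\tau(w)=\tau(y)$, then $\mathcal{C}_{n-2}(w)=\mathcal{C}_{n-2}(y)$. Combined with the fact that the $\tau$-invariant is constant on left cells (so $\mathcal{C}_{n-2}(w)$ cannot be larger than the set of non-split elements with $\tau$-invariant $\tau(w)$), this shows that, for non-split $w$,
$$\mathcal{C}_{n-2}(w) = \{\, y \in W_n^{n-1} \;|\; \tau(y) = \tau(w)\,\}.$$
Here the inclusion $\supseteq$ is exactly Proposition~\ref{proposition:n-2}, and the inclusion $\subseteq$ follows because $\mathcal{C}_{n-2}(w) \subseteq W_n^{n-1}$ by Proposition~\ref{proposition:unions} and $\tau$ is an invariant of combinatorial left cells (using Corollary~\ref{proposition:tau}, since $S \sim_r T$ implies the relevant dominos are moved through open or suitable closed cycles, hence $\tau(Q_{n-2}(w)) = \tau(Q_{n-2}(y))$ whenever $w \sim_{n-2}^L y$).

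\medskip

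On the Kazhdan--Lusztig side, Theorem~\ref{theorem:howse}(b) states precisely that for non-split $w$, the left intermediate cell $\mathcal{K}_{n-2}(w)$ \emph{coincides with the set of all non-split elements with the same $\tau$-invariant as $w$}. Hence $\mathcal{K}_{n-2}(w) = \{\, y \in W_n^{n-1} \;|\; \tau(y)=\tau(w)\,\} = \mathcal{C}_{n-2}(w)$, which is the claim.

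\medskip

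The main obstacle in this argument is already absorbed into the earlier propositions, chiefly Proposition~\ref{proposition:n-2}, whose proof in turn rests on the delicate analysis of the opposite non-core cycle $c=\{n\}$ in Propositions~\ref{proposition:cycles2} and~\ref{proposition:n-1}, together with the identity $G_{n-2} = \Upsilon_{n-2}\circ G_{n-1}$. The only thing left to check here is the easy direction --- that $\tau$ is genuinely a combinatorial-left-cell invariant so that $\mathcal{C}_{n-2}(w)$ is \emph{no larger} than the $\tau$-constant set of non-split elements; this follows from Corollary~\ref{proposition:tau} and Lemma~\ref{lemma:tau:cycle}, since the non-core cycle $\{n\}$ relating $Q_{n-2}(w)$ and $Q_{n-2}(y)$ for $w \sim_{n-2}^L y$ is an open cycle and therefore preserves $\tau$.
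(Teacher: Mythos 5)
Your proposal is correct and follows essentially the same route as the paper: Proposition~\ref{proposition:n-2} gives that the $\tau$-invariant refines combinatorial intermediate cells on non-split elements, Corollary~\ref{proposition:tau} (with Lemma~\ref{lemma:tau:cycle}) gives the reverse refinement, and Theorem~\ref{theorem:howse}(b) together with Proposition~\ref{proposition:unions} matches this with the Kazhdan--Lusztig cells. One cosmetic slip: at rank $n-2$ the tableaux $Q_{n-2}(w)$ and $Q_{n-2}(y)$ are related by moving through some set of non-core \emph{open} cycles, not necessarily the cycle $\{n\}$ (that is the opposite non-core cycle at rank $n-1$), but since Lemma~\ref{lemma:tau:cycle} applies to any open cycle your $\tau$-invariance argument is unaffected.
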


\begin{proof}
Restricting our attention to the set of non-split elements of $W_n$,
    \begin{itemize}
        \item Proposition \ref{proposition:n-2} implies that the $\tau$-invariant defines a partition finer than the one given by combinatorial left intermediate cells,
        \item Corollary~\ref{proposition:tau} implies that the partition into combinatorial left $r$-cells is finer than the $\tau$-invariant partition for all $r \geq 0$, and
        \item  Theorem~\ref{theorem:howse}(b) implies that the $\tau$-invariant partition agrees with the one given by left intermediate cells.
    \end{itemize}
Taken together with Proposition~\ref{proposition:unions}, the lemma follows.
\end{proof}

From Lemmas \ref{lemma:notarea} and \ref{lemma:area} we can now obtain the following explicit formulation of Conjecture \ref{conjecture:bgil}:

\begin{theorem} Consider $w,y \in W_n$, write $r = n-2$, and let $$G_{r}(w) = (S_1,T_1) \text{ while } G_{r}(y) = (S_2,T_2).$$
Then $w \approx_r^L y$  if and only if $T_2 =\MT(T_1,U)$ for some $U \subset \noncorecycles(T_1)$.
\end{theorem}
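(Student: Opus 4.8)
The plan is to recognize that this theorem is nothing more than the $r=n-2$ case of Conjecture~\ref{conjecture:bgil} rewritten in the language of the moving-through map, and then to assemble it from the two preceding lemmas. First I would unwind the right-hand condition. By the description of combinatorial left $r$-cells recalled in Section~\ref{section:cells}, for $T_1 = Q_{n-2}(w)$ one has $\mathcal{C}_{n-2}(w) = \bigsqcup_{U \subset \noncorecycles(T_1)} \mathcal{C}(\MT(T_1,U))$, where $\mathcal{C}(T) = \{z \in W_n : Q_{n-2}(z) = T\}$; hence $y \in \mathcal{C}_{n-2}(w)$ if and only if $Q_{n-2}(y) = \MT(T_1,U)$ for some $U \subset \noncorecycles(T_1)$, i.e.\ exactly the stated condition $T_2 = \MT(T_1,U)$. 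Since the combinatorial left $(n-2)$-cells partition $W_n$, this is equivalent to $\mathcal{C}_{n-2}(w) = \mathcal{C}_{n-2}(y)$, equivalently $w \sim^L_{n-2} y$. Meanwhile $w \approx_{n-2}^L y$ is by definition $\mathcal{K}_{n-2}(w) = \mathcal{K}_{n-2}(y)$. So the theorem is equivalent to the assertion that, for all $w,y \in W_n$, $\mathcal{C}_{n-2}(w) = \mathcal{C}_{n-2}(y)$ if and only if $\mathcal{K}_{n-2}(w) = \mathcal{K}_{n-2}(y)$.

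To prove this it suffices to show $\mathcal{C}_{n-2}(w) = \mathcal{K}_{n-2}(w)$ for every $w \in W_n$ individually, for then the two partitions of $W_n$ literally coincide and the double implication is immediate. Here I would split on the dichotomy from Section~\ref{subsection:partition}: every $w$ is either split or non-split. If $w$ is split, Lemma~\ref{lemma:notarea} gives $\mathcal{C}_{n-2}(w) = \mathcal{K}_{n-2}(w)$; if $w$ is not split, Lemma~\ref{lemma:area} gives the same equality. (Proposition~\ref{proposition:unions} together with Theorem~\ref{theorem:howse} guarantees that both the combinatorial and the Kazhdan--Lusztig partitions respect this dichotomy, so no mixed cells can occur; strictly speaking that is not even needed once the pointwise equality is in hand.) This completes the argument.

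I do not anticipate a genuine obstacle: the content lives entirely in Lemmas~\ref{lemma:notarea} and~\ref{lemma:area} — and, upstream, in Theorem~\ref{theorem:howse} and Proposition~\ref{proposition:MTDelta}. The only point deserving a moment's care is the symmetry of the relation in the theorem statement: one should note that moving through a set of non-core open cycles is an involution carrying $\noncorecycles(T_1)$ bijectively, as a set of labels, onto $\noncorecycles(T_2)$, so that ``$T_2 = \MT(T_1,U)$ for some $U \subset \noncorecycles(T_1)$'' and ``$T_1 = \MT(T_2,U)$ for some $U \subset \noncorecycles(T_2)$'' say the same thing; this matches the fact that $\sim_{n-2}$ is an equivalence relation, and it is exactly what makes the reformulation in the first paragraph rigorous. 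With that observed, the theorem follows.
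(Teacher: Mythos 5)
Your proposal is correct and follows essentially the same route as the paper, which obtains this theorem directly by combining Lemma~\ref{lemma:notarea} (split case) and Lemma~\ref{lemma:area} (non-split case) to identify the combinatorial and Kazhdan--Lusztig left intermediate cell partitions, and then reading off the stated tableau condition as the definition of $\sim^L_{n-2}$. Your added remark on the involutive symmetry of moving through non-core open cycles is a reasonable bit of bookkeeping the paper leaves implicit.
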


In the intermediate parameter setting, Theorem~\ref{theorem:howse} shows that {\it all} left cells  either coincide with, or are unions of, asymptotic left cells.  Together with Conjecture~\ref{conjecture:bgil}, the combinatorics of tableaux in Propositions~\ref{proposition:Wk}, \ref{proposition:cycles2}, and \ref{proposition:MTDelta} suggest that this is still the case for specific Kazhdan-Lusztig left $r$-cells with $r < n-2$.

%
%
%


\end{document}